\newtheorem{theorem}{Theorem}[section]
\newtheorem{lemma}{Lemma}[section]
\newtheorem{proposition}{Proposition}[section]
\newtheorem{remark}{Remark}[section]
\newenvironment{proof}{{\noindent \bf Proof:}}{\hfill$\Box$\medskip}
\definecolor{lred}{rgb}{1,0.8,0.8}
\definecolor{lblue}{rgb}{0.8,0.8,1}
\definecolor{dred}{rgb}{0.6,0,0}
\definecolor{dblue}{rgb}{0,0,0.5}
\definecolor{dgreen}{rgb}{0,0.5,0.5}
 \title{Error bounds for rank constrained optimization problems and applications\footnote{This work is supported by the National Natural Science Foundation of China under project No.11571120, the Natural Science Foundation of Guangdong Province under project No.2015A030313214
 and No.2015A030310298, and the Fundamental Research Funds for the Central Universities(SCUT).}}
\author{Shujun Bi\footnote{State Key Laboratory of Scientific and Engineering Computing, Institute of Computational Mathematics and
 Scientific/Engineering Computing, Academy of Mathematics and Systems Science, Chinese Academy of Science, Beijing, P. R. China (bishujun@lsec.cc.ac.cn).}
 \ \ {\rm and}\ \ Shaohua Pan\footnote{Corresponding author. Department of Mathematics, South China University of Technology, Tianhe District, Guangzhou 510641, P. R. China (shhpan@scut.edu.cn).}}
\begin{document}

  \maketitle

 \begin{abstract}
  This paper is concerned with the rank constrained optimization problem whose feasible set
  is the intersection of the rank constraint set $\mathcal{R}=\!\big\{X\in\mathbb{X}\ |\ {\rm rank}(X)\le \kappa\big\}$
  and a closed convex set $\Omega$. We establish the local (global) Lipschitzian type error bounds
  for estimating the distance from any $X\in \Omega$ ($X\in\mathbb{X}$) to the feasible set
  and the solution set, respectively, under the calmness of a multifunction associated to
  the feasible set at the origin, which is specially satisfied by three classes of
  common rank constrained optimization problems. As an application of the local Lipschitzian
  type error bounds, we show that the penalty problem yielded by moving the rank constraint
  into the objective is exact in the sense that its global optimal solution set coincides with
  that of the original problem when the penalty parameter is over a certain threshold.
  This particularly offers an affirmative answer to the open question whether the penalty problem
  (32) in \cite{GS-Major} is exact or not. As another application,
  we derive the error bounds of the iterates generated by a multi-stage convex relaxation approach
  to those three classes of rank constrained problems
  and show that the bounds are nonincreasing as the number of stages increases.

 \medskip
 \noindent {\bf Key words:}  rank constrained optimization; error bounds; calmness; exact penalty

 \medskip
 \noindent
 {\bf AMS Subject Classifictions (2010): 90C26, 90C22}
 \end{abstract}

 \medskip

  \section{Introduction}\label{sec1}

  Let $\mathbb{X}$ denote the vector space $\mathbb{R}^{n_1\times n_2}$ of all
  $n_1\times n_2$ real matrices or the vector space $\mathbb{H}^n$ of all
  $n\times n$ Hermitian matrices, both endowed with the trace inner product
  $\langle \cdot,\cdot\rangle$ and its induced norm $\|\cdot\|_F$.
  Given a positive integer $\kappa$ and a suitable continuous loss function
  $f\!:\mathbb{X}\to\mathbb{R}$, we are concerned with the following rank constrained
  optimization problem
  \begin{align}\label{prob}
    \min_{X\in\mathbb{X}}\Big\{f(X)\ | \ {\rm rank}(X)\leq \kappa,\ X\in \Omega\Big\},
  \end{align}
  where $\Omega$ is a convex compact subset of $\mathbb{X}$. Such a problem has many
  applications in a host of fields including statistics, signal and image processing,
  system identification and control, collaborative filtering, quantum tomography, finance, and so on
  (see, e.g., \cite{Tony-ROP,Fazel02,Fazel03,GS-Major,Mesbahi,Recht10,Salakhutdinov10, Zhang-LAA}).
  In the sequel, we denote by $\mathcal{F}$ the feasible set of \eqref{prob} and assume that
  $\mathcal{F}\ne \emptyset$, which implies that the solution set of \eqref{prob},
  denoted by $\mathcal{F}^*$, is nonempty.

  \medskip

  Due to the combinatorial nature of rank function, problem \eqref{prob} is generally NP-hard,
  and it is almost impossible to seek a global optimal solution. A common way to resolve
  this class of problems is to adopt convex relaxation technique, which yields a desirable
  local optimal even feasible solution by solving a single or a sequence of tractable convex
  optimization problems. The popular nuclear norm convex relaxation method proposed in \cite{Fazel02}
  belongs to the single-stage convex relaxation class, which received active research
  in the past several years from many fields such as optimization, statistics, information, computer science,
  and so on (see, e.g., \cite{Tony-ROP,Candes09,Gross11,Liu12,Recht10, Recht-NSP,Toh10}).
  Although the nuclear norm promotes low rank solutions, it has a big difference from the rank function
  in a general setting since the former is convex whereas the latter is nonconvex even concave
  (in the positive semidefinite cone). Hence, when the set $\Omega$ is characterizing
  some structure conflicted with low rank, such as the correlation or density matrix structure,
  the nuclear norm relaxation method will fail in yielding a low rank solution.
  In view of this, many researchers recently develop effective solution methods based on
  the sequential convex relaxation models arising from the penalty problems \cite{GS-Major,LuZh-PDrank},
  the nonconvex surrogate problems \cite{Fazel03,LXW13,Miao13,Mohan12}, and the rank constrained
  optimization problem itself \cite{Meka-SVP,SGS-ICML,Wen-SOR}. We notice that to measure the distance
  from any given point to the feasible set or the solution set plays a key role in the analysis of these methods.
  Motivated by this, we in this work take the first step towards the study on Lipschitzian type error bounds
  for \eqref{prob}.

  \medskip

  In this paper, we show that the calmness of a multifunction associated to the feasible set
  $\mathcal{F}$ at the origin is a sufficient and necessary condition for the local Lipschitzian
  error bounds to estimate the distance from any $X\in\Omega$ to $\mathcal{F}$, which is specially
  satisfied by three classes of common rank constrained optimization problems \eqref{prob} where
  $\Omega$ is a ball set, a density matrix set or a correlation matrix set, and under this condition
  derive the global error bound for estimating the distance from any $X\in\mathbb{X}$ to $\mathcal{F}$.
  In addition, under an additional mild assumption for the objective function $f$,
  we also establish the local (global) Lipschitzian error bounds for estimating the distance
  from any $X\in\Omega$ ($X\in\mathbb{X}$) to the solution set $\mathcal{F}^*$.
  To the best of our knowledge, this paper is the first one to study
  the Lipschitzian type error bounds for low-rank optimization problems,
  though there are many works on error bounds for the system of linear inequalities
  and (nondifferentiable) convex inequalities (see, e.g., \cite{Luo96,Man-94,Man-MP98,Pang97-Exp,Zhang}
  and references therein). To illustrate the potential applications of the derived error bounds,
  we show that the penalty problem
  \begin{align}\label{penalty-rc}
    \min_{X\in \Omega}\,\Big\{f(X)+\rho\!\sum_{i=\kappa+1}^n\!\sigma_i(X)\Big\}
  \end{align}
  is exact in the sense that its global optimal solution set coincides with that of \eqref{prob}
  when the penalty parameter $\rho$ is over a certain threshold. This does not only affirmatively
  answer the open question proposed in \cite{GS-Major} about whether the penalty problem (32)
  there is exact or not for the rank constrained correlation matrix problem, but also provides
  a platform for designing convex relaxation algorithms for \eqref{prob}. In addition, we also establish
  the error bounds of the iterates generated by a multi-stage convex relaxation approach to
  the rank constrained optimization problems with $\Omega$ being a ball set, a density matrix set
  or a correlation matrix set, and show that the error bound sequence is nonincreasing.

  \medskip

  To close this section, we provide a brief summary of notations used in this paper:
  \begin{itemize}
   \item  $\mathbb{H}^{n}_+$ denotes the cone of Hermitian positive semidefinite matrices.
          For $X\in\mathbb{H}^{n}$, we assume that its eigenvalue decomposition takes the form of
          $X=\sum_{i=1}^n \lambda_i(X)u_iu_i^{\mathbb{T}}$ where $\lambda_1(X)\ge\cdots\ge\lambda_n(X)$
          and all $u_i$ are complex orthonormal column vectors.

   \item  For $X\in\mathbb{R}^{n_1\times n_2}$, we denote by $\|X\|_*$ the nuclear norm of $X$, and assume that
          its singular value decomposition (SVD) takes the form of $X=\sum_{i=1}^n \sigma_i(X)u_iv_i^{\mathbb{T}}$
          where $\sigma_1(X)\ge\cdots\ge\sigma_n(X)$ with $n=\min(n_1,n_2)$, and all $u_i\in\mathbb{R}^{n_1}$
          and $v_i\in\mathbb{R}^{n_2}$ are orthonormal column vectors.

   \item  For a given vector $x\in\mathbb{R}^n$, ${\rm Diag}(x)$ denotes the diagonal matrix with $i$th diagonal entry being $x_i$.
          For a vector $u\in\mathbb{R}_{++}^n$, $\frac{1}{\sqrt{u}}$ means the vector
          $(\frac{1}{\sqrt{u_1}},\ldots,\frac{1}{\sqrt{u_n}})^{\mathbb{T}}\in\mathbb{R}^n$.

   \item The notation $\mathbb{B}_{\mathbb{X}}$ denotes a closed unit ball of the space $\mathbb{X}$ centered at the origin.
         For a closed subset $\mathcal {S}\subseteq\mathbb{X}$, $\Pi_\mathcal {S}(X)$ means a projection point of $X$
         onto the set $\mathcal {S}$.
  \end{itemize}

  \section{Lipschitzian type error bounds}\label{sec2}

  Let $\mathbb{Y}$ and $\mathbb{Z}$ be the finite dimensional vector spaces
  which are both endowed with the norm $\|\cdot\|$. Recall that a multifunction
  $\Upsilon\!:\mathbb{Y}\rightrightarrows\mathbb{Z}$ is calm at $\overline{y}$
  for $\overline{z}\in \Upsilon(\overline{y})$ if there exist a constant
  $\alpha\ge 0$ along with neighborhoods $\mathcal{U}$ of $\overline{y}$ and
  $\mathcal{V}$ of $\overline{z}$ such that
  \[
    \Upsilon(y)\cap \mathcal{V}\subseteq \Upsilon(\overline{y})+\alpha\|y-\overline{y}\|\mathbb{B}_{\mathbb{Z}}
    \quad {\rm for \ all}\ y\in \mathcal{U}.
  \]
  By \cite[Exercise 3H.4]{DR09}, we know that there is no need at all to mention
  a neighborhood $\mathcal{U}$ of $\overline{y}$ in the description of calmness,
  i.e., the following equivalent description holds.
  \begin{lemma}\label{Lemma2.1}
   For a multifunction $\Upsilon\!:\mathbb{Y}\rightrightarrows\mathbb{Z}$,
   the calmness of $\Upsilon$ at $\overline{y}$ for $\overline{z}\in\Upsilon(\overline{y})$
   is equivalent to the existence of a constant $\alpha\ge 0$ and a neighborhood $\mathcal{V}$ of $\overline{z}$ such that
  \begin{equation*}
    \Upsilon(y)\cap \mathcal{V}\subseteq \Upsilon(\overline{y})+\alpha\|y-\overline{y}\|\mathbb{B}_{\mathbb{Z}}
    \quad {\rm \ for \ all}\ y\in \mathbb{Y},
  \end{equation*}
  or the existence of a constant $\alpha\ge 0$ and a neighborhood $\mathcal{V}$ of $\overline{z}$ such that
  \begin{equation*}
   {\rm dist}(z, \Upsilon(\overline{y}))\le \alpha\,{\rm dist}(\overline{y},\Upsilon^{-1}(z))
    \quad {\rm \ for \ all}\ z\in \mathcal{V}.
  \end{equation*}
  \end{lemma}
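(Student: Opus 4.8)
The plan is to prove Lemma~\ref{Lemma2.1} by appealing directly to \cite[Exercise 3H.4]{DR09} as indicated, and then showing that the two displayed reformulations are in turn equivalent to the ``neighborhood-free in $\mathcal{U}$'' version. First I would recall that the original definition of calmness carries two neighborhoods, $\mathcal{U}$ of $\overline{y}$ and $\mathcal{V}$ of $\overline{z}$; the content of \cite[Exercise 3H.4]{DR09} is precisely that $\mathcal{U}$ is redundant, i.e.\ one may take $\mathcal{U}=\mathbb{Y}$. So the first displayed equivalence in the statement is literally that exercise, and I would cite it for one direction (calmness $\Rightarrow$ the $\mathcal{U}$-free inclusion) while the reverse direction is trivial since the $\mathcal{U}$-free inclusion is a special case of the definition with $\mathcal{U}=\mathbb{Y}$.

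For the second displayed equivalence, the plan is to unfold the distance notation and show it is just a restatement of the inclusion. The key observation is the elementary identity relating membership in an inverse image to distances: $y\in\Upsilon^{-1}(z)$ iff $z\in\Upsilon(y)$, so ${\rm dist}(\overline{y},\Upsilon^{-1}(z))=\inf\{\|\overline{y}-y\|\ :\ z\in\Upsilon(y)\}$. Given $z\in\mathcal{V}$, I would argue as follows. If ${\rm dist}(\overline{y},\Upsilon^{-1}(z))=+\infty$, i.e.\ $z\notin\Upsilon(y)$ for every $y$, the inequality ${\rm dist}(z,\Upsilon(\overline{y}))\le\alpha\,{\rm dist}(\overline{y},\Upsilon^{-1}(z))$ holds vacuously. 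Otherwise, for any $y$ with $z\in\Upsilon(y)$ (equivalently $\overline{y}\in\Upsilon^{-1}(z)$ is not what we want---rather $y\in\Upsilon^{-1}(z)$), we have $z\in\Upsilon(y)\cap\mathcal{V}$, so by the inclusion $z\in\Upsilon(\overline{y})+\alpha\|y-\overline{y}\|\mathbb{B}_{\mathbb{Z}}$, which gives ${\rm dist}(z,\Upsilon(\overline{y}))\le\alpha\|y-\overline{y}\|$; taking the infimum over all such $y$ yields ${\rm dist}(z,\Upsilon(\overline{y}))\le\alpha\,{\rm dist}(\overline{y},\Upsilon^{-1}(z))$. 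Conversely, assuming the distance inequality holds for all $z\in\mathcal{V}$, I would take any $z\in\Upsilon(y)\cap\mathcal{V}$: then $y\in\Upsilon^{-1}(z)$, so ${\rm dist}(\overline{y},\Upsilon^{-1}(z))\le\|y-\overline{y}\|$, hence ${\rm dist}(z,\Upsilon(\overline{y}))\le\alpha\|y-\overline{y}\|$, which says $z\in\Upsilon(\overline{y})+\alpha\|y-\overline{y}\|\mathbb{B}_{\mathbb{Z}}$. This establishes the inclusion, completing the cycle of equivalences.

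I do not anticipate a genuine obstacle here: the lemma is essentially a bookkeeping statement that repackages the definition of calmness in two standard ways, and the only external input is \cite[Exercise 3H.4]{DR09}. The one point requiring a little care is the treatment of the case ${\rm dist}(\overline{y},\Upsilon^{-1}(z))=+\infty$ (equivalently $\Upsilon^{-1}(z)=\emptyset$), where by the usual convention ${\rm dist}(\overline{y},\emptyset)=+\infty$ and the inequality holds trivially; and symmetrically, one should note that when $\Upsilon^{-1}(z)=\emptyset$ there is no $y$ with $z\in\Upsilon(y)$, so the inclusion reformulation places no constraint either. Making sure both reformulations are ``activated'' only on the same set of relevant $z$'s (those with nonempty $\Upsilon^{-1}(z)$ meeting $\mathcal{V}$) is the delicate-but-routine part. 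I would close the proof by noting that chaining the two equivalences with \cite[Exercise 3H.4]{DR09} gives exactly the claimed statement.
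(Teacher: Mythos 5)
Your proposal is correct and follows essentially the same route as the paper, which itself offers no proof beyond invoking \cite[Exercise 3H.4]{DR09} to drop the neighborhood $\mathcal{U}$; your additional unfolding of the distance reformulation via $y\in\Upsilon^{-1}(z)\Leftrightarrow z\in\Upsilon(y)$ is the standard routine verification and is sound (with the usual tacit assumption that $\Upsilon(\overline{y})$ is closed so that the distance bound converts back into the inclusion with the closed ball).
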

  Define a multifunction $\Gamma\!: \mathbb{R}\rightrightarrows\mathbb{X}$ associated to
  the feasible set of problem \eqref{prob} as follows
  \begin{equation}\label{Gamma}
    \Gamma(\omega):=\Big\{X\in \Omega\ |\ \sum_{i=\kappa+1}^n\sigma_i(X)=\omega\Big\}\quad{\rm for}\ \omega\in\mathbb{R}.
  \end{equation}
  In this section, we show that the calmness of the multifunction $\Gamma$ at $0$ for each $X\in\Gamma(0)$
  is a sufficient and necessary condition for the local Lipschitzian error bounds to estimate the distance
  from any $X\in\Omega$ to the feasible set $\mathcal{F}$, which is specially satisfied for three classes
  of popular rank constrained optimization problems, and then establish the global error bounds for estimating
  the distance from any $X\in \mathbb{X}$ to $\mathcal{F}$ under the calmness of $\Gamma$, and derive
  the local and global Lipschitzian error bounds for estimating the distance to $\mathcal{F}^*$
  under an additional restricted strong convexity for the objective function $f$.

  \subsection{Local error bounds}\label{subsec2.1}

  The following theorem states that the distance from any $Z\in \Omega$ to $\Gamma(0)=\Omega\cap\mathcal{R}$
  can be bounded above by $\sum_{i=\kappa+1}^n\sigma_i(Z)$ iff the multifunction $\Gamma$ is calm at $0$
  for each $X\in\Gamma(0)$.
  \begin{theorem}\label{theorem2.1}
   Let $\Gamma$ be the multifunction defined by \eqref{Gamma}. The multifunction $\Gamma$
   is calm at $0$ for each $X\in\Gamma(0)$ if and only if there exists a constant $c>0$ such that
   \begin{equation}\label{ineq-lemma}
     {\rm dist}(Z,\Gamma(0))\le c\,{\rm dist}(0,\Gamma^{-1}(Z))=c\, \sum_{i=\kappa+1}^n\sigma_i(Z)
     \quad{\rm for\ all}\ Z\in\Omega.
   \end{equation}
  \end{theorem}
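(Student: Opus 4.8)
The plan is to recognize that the equivalence is essentially a translation of Lemma~\ref{Lemma2.1} applied to the specific multifunction $\Gamma$. First I would compute the inverse multifunction: for $Z\in\mathbb{X}$, $\Gamma^{-1}(Z)=\{\omega\in\mathbb{R}\ |\ Z\in\Omega,\ \sum_{i=\kappa+1}^n\sigma_i(Z)=\omega\}$, which is the singleton $\{\sum_{i=\kappa+1}^n\sigma_i(Z)\}$ when $Z\in\Omega$ and empty otherwise. Consequently, for $Z\in\Omega$ we have ${\rm dist}(0,\Gamma^{-1}(Z))=\big|\sum_{i=\kappa+1}^n\sigma_i(Z)\big|=\sum_{i=\kappa+1}^n\sigma_i(Z)$, the last equality because singular values are nonnegative. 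This already accounts for the second equality in \eqref{ineq-lemma}, so the content reduces to showing that calmness of $\Gamma$ at $0$ for each $X\in\Gamma(0)$ is equivalent to the inequality ${\rm dist}(Z,\Gamma(0))\le c\sum_{i=\kappa+1}^n\sigma_i(Z)$ holding for all $Z\in\Omega$ with a \emph{uniform} constant $c$.

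For the ``if'' direction this is immediate: if \eqref{ineq-lemma} holds, then since $\Gamma(0)=\Upsilon(\overline{y})$ with $\overline{y}=0$, the second characterization in Lemma~\ref{Lemma2.1} (with $\mathcal{V}=\mathbb{X}$, $\alpha=c$, noting ${\rm dist}(z,\Upsilon(\overline y))=0$ trivially for $z\notin\Omega$ since then $\Gamma^{-1}(z)=\emptyset$ makes the right side $+\infty$) gives calmness of $\Gamma$ at $0$ for every $X\in\Gamma(0)$ at once. For the ``only if'' direction, fix $X\in\Gamma(0)$; calmness yields a radius $\delta_X>0$ and constant $\alpha_X\ge0$ with ${\rm dist}(Z,\Gamma(0))\le\alpha_X\sum_{i=\kappa+1}^n\sigma_i(Z)$ for all $Z\in\Omega$ with $\|Z-X\|_F<\delta_X$. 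The main obstacle is upgrading these \emph{local, point-dependent} estimates to a single global constant $c$ valid on all of $\Omega$. Here I would exploit that $\Omega$ is compact: the open balls $\{B(X,\delta_X)\}_{X\in\Gamma(0)}$ cover the compact set $\Gamma(0)$, so finitely many of them, say centered at $X^1,\dots,X^m$ with constants $\alpha_1,\dots,\alpha_m$, cover $\Gamma(0)$; their union is an open set $\mathcal{O}\supseteq\Gamma(0)$, and on $\mathcal{O}\cap\Omega$ the inequality holds with $c_0:=\max_j\alpha_j$.

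It remains to handle points $Z\in\Omega\setminus\mathcal{O}$. For such $Z$ we have $\tau:={\rm dist}(\Omega\setminus\mathcal O,\Gamma(0))>0$ by compactness of $\Omega\setminus\mathcal O$ and $\Gamma(0)$ and their disjointness, while ${\rm dist}(Z,\Gamma(0))\le{\rm diam}(\Omega)=:D$. I would then argue that $\sum_{i=\kappa+1}^n\sigma_i(Z)$ is bounded away from zero on $\Omega\setminus\mathcal O$: if not, a sequence $Z^k\in\Omega\setminus\mathcal O$ with $\sum_{i=\kappa+1}^n\sigma_i(Z^k)\to0$ would, by compactness, have a subsequential limit $\overline Z\in\Omega\setminus\mathcal O$ with $\sum_{i=\kappa+1}^n\sigma_i(\overline Z)=0$ (using continuity of singular values), i.e.\ $\overline Z\in\Gamma(0)\subseteq\mathcal O$, a contradiction. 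Hence there is $\eta>0$ with $\sum_{i=\kappa+1}^n\sigma_i(Z)\ge\eta$ for all $Z\in\Omega\setminus\mathcal O$, and therefore ${\rm dist}(Z,\Gamma(0))\le D=\frac{D}{\eta}\cdot\eta\le\frac{D}{\eta}\sum_{i=\kappa+1}^n\sigma_i(Z)$ there. Taking $c:=\max\{c_0,D/\eta\}$ yields \eqref{ineq-lemma} on all of $\Omega$, which completes the proof.
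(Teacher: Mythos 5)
Your proof is correct and follows essentially the same route as the paper: the ``if'' direction is an immediate application of Lemma~\ref{Lemma2.1} (using that $\Gamma^{-1}(Z)=\emptyset$ for $Z\notin\Omega$), and the ``only if'' direction uses a finite subcover of the compact set $\Gamma(0)$ together with a compactness/continuity argument showing $\sum_{i=\kappa+1}^n\sigma_i$ is bounded away from zero on the uncovered part of $\Omega$, exactly as in the paper's two-case analysis. The only blemish is the parenthetical claim that ${\rm dist}(z,\Upsilon(\overline y))=0$ for $z\notin\Omega$; the correct justification (which you also give) is that the right-hand side is $+\infty$ there.
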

  \begin{proof}
   ``$\Longrightarrow$''. By the calmness of $\Gamma$ at $0$ for each $X\in\Gamma(0)=\Omega\cap\mathcal{R}$
   and Lemma \ref{Lemma2.1}, it follows that for each $X\in \Omega\cap\mathcal{R}$,
   there exist constants $\alpha(X)\ge0$ and $\epsilon(X)>0$ such that
   \begin{equation}\label{subregular}
     {\rm dist}(Y,\Gamma(0))\le \alpha(X)\,{\rm dist}(0,\Gamma^{-1}(Y))\quad\ \forall\,Y\in \mathbb{B}(X,\epsilon(X)),
   \end{equation}
   where $\mathbb{B}(X,\epsilon(X))$ is a closed ball of radius $\epsilon(X)$ centered at $X$.
   Notice that the compact set $\Omega\cap\mathcal{R}$ is covered by the set $\bigcup_{X\in\Omega\cap\mathcal{R}}\big(X+\frac{\epsilon(X)}{2}\mathbb{B}_{\mathbb{X}}^{\circ}\big)$,
   where $\mathbb{B}_{\mathbb{X}}^{\circ}$ denotes the open unit ball around the origin in $\mathbb{X}$.
   By the Heine-Borel theorem, there exist a finite number of points
   $X^1,X^2,\ldots,X^m\in \Omega\cap\mathcal{R}$ such that
   \(
     \Omega\cap\mathcal{R}\subseteq\bigcup_{i=1}^m\big(X^i+\frac{\epsilon(X^i)}{2}\mathbb{B}_{\mathbb{X}}^{\circ}\big).
   \)
   Write
   \[
     \overline{\epsilon}:=\min\{\epsilon(X^1),\ldots,\epsilon(X^m)\}\ \ {\rm and}\ \
    \overline{\alpha}:=\max\{\alpha(X^1),\ldots,\alpha(X^m)\}.
   \]
   Let $Z$ be an arbitrary point from $\Omega$. We proceed the arguments by two cases as below.

   \medskip
   \noindent
   {\bf Case 1:} ${\rm dist}(Z,\Omega\cap\mathcal{R})\le \overline{\epsilon}/2$. Since the set $\Omega\cap\mathcal{R}$ is closed,
   there must exist $\overline{Z}\in \Omega\cap\mathcal{R}$ such that $\|Z-\overline{Z}\|_F\le \overline{\epsilon}/2$.
   Since $\overline{Z}\in \Omega\cap\mathcal{R}$, there exists a $k\in\{1,2,\ldots,m\}$ such that
   $\|\overline{Z}\!-\!X^k\|_F\!<\!\epsilon(X^k)/2$. Consequently,
   \(
     \|Z\!-\!X^k\|_F\!\le\! \|Z\!-\!\overline{Z}\|_F\!+\!\|\overline{Z}\!-\!X^k\|_F\!\le\!\epsilon(X^k).
   \)
   Together with \eqref{subregular},
   \(
     {\rm dist}(Z,\Gamma(0))\le \overline{\alpha}\,{\rm dist}(0,\Gamma^{-1}(Z)).
   \)
   This shows that \eqref{ineq-lemma} holds with $c=\overline{\alpha}$.

   \medskip
   \noindent
   {\bf Case 2:} ${\rm dist}(Z,\Omega\cap\mathcal{R})>\overline{\epsilon}/2$. Now there must exist
   an $\eta>0$ such that $\sum_{i=\kappa+1}^n\sigma_i(Y)\ge\eta$ for all $Y\in \Omega$ with
   ${\rm dist}(Y,\Omega\cap\mathcal{R})>\overline{\epsilon}/2$. If not, one may select a sequence
   $\{Z^k\}\subseteq \Omega$ with ${\rm dist}(Z^k,\Omega\cap\mathcal{R})>\overline{\epsilon}/2$
   such that $\sum_{i=\kappa+1}^n\sigma_i(Z^k)\le\eta^k$ for all $k$, where $\{\eta^k\}$ is a sequence of
   positive numbers with $\lim_{k\to +\infty}\eta^k=0$.
   Since $\Omega$ is compact, we without loss of generality assume that $\{Z^k\}$ converges to $Z^*\in \Omega$.
   Then, from the locally Lipschitz continuity of $\sigma_i(\cdot)$, it follows that
   $\sum_{i=\kappa+1}^n\sigma_i(Z^*)\le 0$, and then $Z^*\in \Omega\cap\mathcal{R}$. On the other hand,
   from ${\rm dist}(Z^k,\Omega\cap\mathcal{R})>\overline{\epsilon}/2$ for all $k$, we have
   \(
      {\rm dist}(Z^*,\Omega\cap\mathcal{R})>\overline{\epsilon}/2.
   \)
  Thus, we obtain a contradiction, and the above statement holds.
  Since $\Omega$ is bounded, it follows that ${\rm dist}(\cdot,\Omega\cap\mathcal{R})$ is bounded above on $\Omega$, say,
  by some $M>0$. Thus, for all $Z\in \Omega$ with ${\rm dist}(Z,\Omega\cap\mathcal{R})>\overline{\epsilon}/2$, one has that
  \(
     {\rm dist}(Z,\Omega\cap\mathcal{R})\le M\le ({M}/{\eta}){\textstyle \sum_{i=\kappa+1}^n}\sigma_i(Z).
  \)
  By taking $c=M/\eta$, the desired inequality \eqref{ineq-lemma} then follows.

  \medskip
  \noindent
  ``$\Longleftarrow$. Let $X$ be an arbitrary point from $\Gamma(0)$ and
  $\epsilon\in(0,1)$ be an arbitrary constant. By Lemma \ref{Lemma2.1}, we only need to
  argue that there must exist a constant $c'>0$ such that
  \begin{equation}\label{temp-equa21}
    {\rm dist}(Z,\Gamma(0))\le c'\,{\rm dist}(0,\Gamma^{-1}(Z))\quad\ \forall Z\in\mathbb{B}(X,\epsilon).
  \end{equation}
  Indeed, for any $Z\in\mathbb{B}(X,\epsilon)$, if $Z\notin \Omega$,
  then $\Gamma^{-1}(Z)=\emptyset$ by noting that ${\rm dom}\,\Gamma^{-1}\subseteq\Omega$,
  and inequality \eqref{temp-equa21} holds for any $c'>0$; if $Z\in\Omega$, then
  by taking $c'=c$, inequality \eqref{temp-equa21} follows directly from \eqref{ineq-lemma}.
  Until now, the proof is completed.
 \end{proof}

 \medskip

  Next, we show that the sufficient and necessary condition in Theorem \ref{theorem2.1}
  is especially satisfied by three classes of common rank constrained optimization problems
  in which $\Omega$ is a ball, a density matrix set or a correlation matrix set,
  and provide the Lipschitzian type error bounds to estimate the distance
  from any $X\in\Omega$ to $\Omega\cap\mathcal{R}$ for them.

  \medskip
  
  {\bf (1) Rank constrained optimization problems over a ball.}  The feasible set of this class of
  rank constrained optimization problems takes the following form
  \begin{equation}\label{F-ball}
    \mathcal{F}:=\Big\{X\in\mathbb{R}^{n_1\times n_2}\ |\ {\rm rank}(X)\le\kappa,\ |\!\|X\|\!|\le \gamma\Big\},
  \end{equation}
  where $|\!\|\cdot\|\!|$ is a matrix norm and $\gamma>0$ is a given constant.
  The following proposition provides a Lipschitzian type bound for the error
  ${\rm dist}(X,\mathcal {F})$ with $|\!\|X\|\!|\le \gamma$.
 \begin{proposition}\label{prop-ball}
  Let $\Omega=\big\{X\in \mathbb{R}^{n_1\times n_2}\ |\ |\!\|X\|\!|\le \gamma\big\}$.
  Then, for any $X\in\Omega$, by assuming that $X$ has the SVD of the form
  $\sum_{i=1}^n\sigma_i(X)u_iv_i^\mathbb{T}$, it holds that
  \begin{equation}\label{ineq-ball}
   {\rm dist}(X,\mathcal {F})\le \sqrt{1+c^2_u/c_l^2}\,\big\|X-\Pi_{\mathcal{R}}(X)\big\|_F
   \ \ {\rm with}\ \ \Pi_\mathcal {R}(X):=\sum_{i=1}^\kappa\sigma_i(X)u_iv_i^\mathbb{T},
  \end{equation}
  where $c_l$ and $c_u$ are positive constants such that $c_l\|\cdot\|_F\leq|\!\|\cdot\|\!|\leq c_u\|\cdot\|_F$.
  In particular,
  \begin{equation}\label{inclusion-ball}
   \Gamma(t)\subseteq \Gamma(0)+\sqrt{1+c^2_u/c_l^2}\,|t|\,\mathbb{B}_{\mathbb{R}^{n_1\times n_2}}\quad\ \forall t\in\mathbb{R}.
  \end{equation}
  When $|\!\|\cdot\|\!|$ is unitarily invariant, the above constant $\sqrt{1+c^2_u/c_l^2}$
  can be replaced by $1$.
 \end{proposition}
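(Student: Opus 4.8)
The plan is to exhibit, for each $X\in\Omega$, an explicit feasible point of $\mathcal{F}$ whose Frobenius distance to $X$ is controlled by $\|X-\Pi_{\mathcal{R}}(X)\|_F=\big(\sum_{i=\kappa+1}^n\sigma_i(X)^2\big)^{1/2}$. The natural candidate is the truncated SVD $\Pi_{\mathcal{R}}(X)=\sum_{i=1}^\kappa\sigma_i(X)u_iv_i^{\mathbb{T}}$, which has rank at most $\kappa$ and, by the Eckart--Young theorem, is a projection of $X$ onto $\mathcal{R}$; the only issue is that it may fail to satisfy $|\!\|\,\cdot\,\|\!|\le\gamma$. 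I would therefore split into two cases. If $|\!\|\Pi_{\mathcal{R}}(X)\|\!|\le\gamma$, then $\Pi_{\mathcal{R}}(X)\in\mathcal{F}$ and ${\rm dist}(X,\mathcal{F})\le\|X-\Pi_{\mathcal{R}}(X)\|_F$, which already gives \eqref{ineq-ball}. Otherwise I would rescale and set $Y:=\frac{\gamma}{|\!\|\Pi_{\mathcal{R}}(X)\|\!|}\Pi_{\mathcal{R}}(X)$, so that ${\rm rank}(Y)\le\kappa$ and $|\!\|Y\|\!|=\gamma$, hence $Y\in\mathcal{F}$.

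The core estimate is to bound $\|X-Y\|_F$. Writing $X-Y=(X-\Pi_{\mathcal{R}}(X))+(\Pi_{\mathcal{R}}(X)-Y)$, the two summands are supported on the disjoint index sets $\{\kappa+1,\dots,n\}$ and $\{1,\dots,\kappa\}$ of the fixed SVD of $X$, so they are orthogonal in the trace inner product and Pythagoras yields $\|X-Y\|_F^2=\|X-\Pi_{\mathcal{R}}(X)\|_F^2+\|\Pi_{\mathcal{R}}(X)-Y\|_F^2$. For the second term I would compute $\|\Pi_{\mathcal{R}}(X)-Y\|_F=\frac{|\!\|\Pi_{\mathcal{R}}(X)\|\!|-\gamma}{|\!\|\Pi_{\mathcal{R}}(X)\|\!|}\,\|\Pi_{\mathcal{R}}(X)\|_F$ and then estimate, using $|\!\|X\|\!|\le\gamma$, the reverse triangle inequality, and the norm comparison $c_l\|\cdot\|_F\le|\!\|\cdot\|\!|\le c_u\|\cdot\|_F$: namely $|\!\|\Pi_{\mathcal{R}}(X)\|\!|-\gamma\le|\!\|\Pi_{\mathcal{R}}(X)\|\!|-|\!\|X\|\!|\le|\!\|\Pi_{\mathcal{R}}(X)-X\|\!|\le c_u\|X-\Pi_{\mathcal{R}}(X)\|_F$ together with $\|\Pi_{\mathcal{R}}(X)\|_F\le c_l^{-1}|\!\|\Pi_{\mathcal{R}}(X)\|\!|$, so that $\|\Pi_{\mathcal{R}}(X)-Y\|_F\le (c_u/c_l)\|X-\Pi_{\mathcal{R}}(X)\|_F$. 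Substituting gives $\|X-Y\|_F^2\le(1+c_u^2/c_l^2)\|X-\Pi_{\mathcal{R}}(X)\|_F^2$, which is \eqref{ineq-ball}. I expect the main obstacle to be precisely extracting the sharp factor $\sqrt{1+c_u^2/c_l^2}$ rather than the crude $1+c_u/c_l$ that a direct triangle inequality in $\|\cdot\|_F$ would produce — this is exactly what the orthogonal (Pythagorean) splitting buys us, and it requires noticing that $X-\Pi_{\mathcal{R}}(X)$ and $\Pi_{\mathcal{R}}(X)-Y$ live on complementary SVD blocks.

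For \eqref{inclusion-ball}, note that $\Gamma(t)=\emptyset$ when $t<0$, so the inclusion is trivial there; for $t\ge0$ and $X\in\Gamma(t)$ we have $X\in\Omega$ and $\|X-\Pi_{\mathcal{R}}(X)\|_F=\big(\sum_{i=\kappa+1}^n\sigma_i(X)^2\big)^{1/2}\le\sum_{i=\kappa+1}^n\sigma_i(X)=t=|t|$, so \eqref{ineq-ball} together with the closedness of $\Gamma(0)=\Omega\cap\mathcal{R}$ produces a point of $\Gamma(0)$ within Frobenius distance $\sqrt{1+c_u^2/c_l^2}\,|t|$ of $X$, giving the stated inclusion. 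Finally, when $|\!\|\cdot\|\!|$ is unitarily invariant it is a symmetric gauge function of the singular values and hence monotone; since the singular values of $\Pi_{\mathcal{R}}(X)$ are dominated entrywise by those of $X$, this forces $|\!\|\Pi_{\mathcal{R}}(X)\|\!|\le|\!\|X\|\!|\le\gamma$, so the first case always applies, no rescaling is needed, and the constant $1$ works in both \eqref{ineq-ball} and \eqref{inclusion-ball}.
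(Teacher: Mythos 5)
Your proposal is correct and follows essentially the same route as the paper: construct the feasible point by truncating the SVD and rescaling when the norm constraint is violated, exploit the orthogonality of $X-\Pi_{\mathcal{R}}(X)$ and $\Pi_{\mathcal{R}}(X)$ to get the Pythagorean bound with constant $\sqrt{1+c_u^2/c_l^2}$, and handle the inclusion via $\|\cdot\|_F\le\|\cdot\|_*$ on the tail. The paper merely packages your two cases into the single formula $\widehat{X}_{\mathcal{F}}=\frac{\gamma}{\max(\gamma,|\!\|\Pi_{\mathcal{R}}(X)\|\!|)}\Pi_{\mathcal{R}}(X)$ and cites \cite[Corollary 3.5.9]{HJ91} for the unitarily invariant case, which is the same monotonicity-of-symmetric-gauge-functions fact you invoke.
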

 \begin{proof}
 Let $X$ be an arbitrary point in $\Omega$ with the SVD given by $\sum_{i=1}^n\sigma_i(X)u_iv_i^\mathbb{T}$. Define
  \begin{equation}\label{XF}
    \widehat{X}_{\mathcal{F}}
    := \frac{\gamma}{\max(\gamma,|\!\|\Pi_\mathcal {R}(X)\|\!|)}\Pi_\mathcal {R}(X).
  \end{equation}
  It is easy to verify that $\widehat{X}_{\mathcal{F}}\in \mathcal{F}=\Gamma(0)$.
  Thus, to establish \eqref{ineq-ball}, it suffices to argue that
  \begin{equation}\label{aim-ineq1}
    \|X-\widehat{X}_{\mathcal{F}}\|_F\le \sqrt{1+c^2_u/c_l^2}\,\big\|X-\Pi_{\mathcal{R}}(X)\big\|_F.
  \end{equation}
  When $|\!\|\Pi_\mathcal {R}(X)\|\!|\leq \gamma$, inequality \eqref{aim-ineq1} holds since
  \(
    \big\|X-\widehat{X}_{\mathcal{F}}\big\|_F=\|X-\Pi_\mathcal {R}(X)\|_F.
  \)
  We next consider the case where $|\!\|\Pi_\mathcal {R}(X)\|\!|>\gamma$. Now we have that
   \begin{align}
    \big\|X-\widehat{X}_{\mathcal{F}}\big\|_F^2
    &= \Big\|X-\Pi_\mathcal {R}(X)+\Pi_\mathcal {R}(X)\Big(1-\frac{\gamma}{|\!\|\Pi_\mathcal {R}(X)\|\!|}\Big)\Big\|_F^2\nonumber\\
    &= \|X-\Pi_\mathcal {R}(X)\|_F^2+\|\Pi_\mathcal {R}(X)\|_F^2\Big(\frac{|\!\|\Pi_\mathcal {R}(X)\|\!|-\gamma}{|\!\|\Pi_\mathcal {R}(X)\|\!|}\Big)^2\nonumber\\
    &\leq \|X-\Pi_\mathcal {R}(X)\|_F^2+\Big(\frac{c_u\|X\!-\!\Pi_\mathcal {R}(X)\|_F\|\Pi_\mathcal {R}(X)\|_F}{|\!\|\Pi_\mathcal {R}(X)\|\!|}\Big)^2\nonumber\\
     &\leq \|X-\Pi_\mathcal {R}(X)\|_F^2+\Big(\frac{c_u\|X\!-\!\Pi_\mathcal {R}(X)\|_F}{c_l}\Big)^2\nonumber
   \end{align}
   where the second equality is by $\langle X-\Pi_\mathcal {R}(X),\Pi_\mathcal {R}(X)\rangle=0$,
   the first inequality is since
   \[
     \gamma<|\!\|\Pi_\mathcal {R}(X)\|\!|\!\le\! |\!\|X\|\!|\!+\!|\!\|X\!-\!\Pi_\mathcal {R}(X)\|\!|
   \!\le\! \gamma\!+\!c_u\|X\!-\!\Pi_\mathcal {R}(X)\|_F,
   \]
  and the second one is due to $c_l\|\Pi_\mathcal {R}(X)\|_F\leq |\!\|\Pi_\mathcal {R}(X)\|\!|$.
  The last inequality implies \eqref{aim-ineq1}.
  We next prove that \eqref{inclusion-ball} holds. Indeed, if $t\in(-\infty,0)$,
  then the inclusion \eqref{inclusion-ball} immediately holds since $\Gamma(t)=\emptyset$.
  Now let $t$ be an arbitrary point from $[0,+\infty)$.
  Let $Z$ be an arbitrary point from $\Gamma(t)$ and define $\widehat{Z}_{\mathcal{F}}$
  as in \eqref{XF}. Then, by noting that $\Gamma(t)\subseteq\Omega$, we have
  \begin{align}\label{temp-ineq211}
    {\rm dist}(Z,\Gamma(0))\le \|Z-\widehat{Z}_{\mathcal{F}}\|_F
    &\le \sqrt{1+c^2_u/c_l^2}\,\|Z-\Pi_\mathcal {R}(Z)\|_F \nonumber\\
    &\le \sqrt{1+c^2_u/c_l^2}\,\|Z-\Pi_\mathcal {R}(Z)\|_*=\sqrt{1+c^2_u/c_l^2}\,t,
   \end{align}
  where the equality is due to $\|Z-\!\Pi_\mathcal {R}(Z)\|_*\!=\sum_{i=\kappa+1}^n\sigma_i(Z)$
  and $Z\in \Gamma(t)$. Equation \eqref{temp-ineq211} means that
  $Z\in \Gamma(0)+\sqrt{1+c^2_u/c_l^2} t\,\mathbb{B}_{\mathbb{R}^{n_1\times n_2}}$.
  Thus, \eqref{inclusion-ball} follows by the arbitrariness of $t$ in $\mathbb{R}$.

  \medskip

  When $|\!\|\cdot\|\!|$ is unitarily invariant, it is easy to check that
  $\Pi_\mathcal {R}(X)\in \Gamma(0)$ for any $X\in \Omega$ (see \cite[Corollary 3.5.9]{HJ91}).
  Then, by letting $\widehat{X}_{\mathcal{F}}=\Pi_\mathcal {R}(X)$ and using the same arguments as above,
  we have the second part of conclusions. The proof is completed.
  \end{proof}

  \medskip

  The result of Proposition \ref{prop-ball} is not trivial when $|\!\|\cdot\|\!|$
  is not unitarily invariant. Taking $|\!\|\cdot\|\!|=\|\cdot\|_\infty$, the infinity norm of matrices,
  since $\frac{1}{\sqrt{n_1n_2}}\|\cdot\|_F\le\!\|\cdot\|_\infty\!\le \!\|\cdot\|_F$, we have
  \begin{equation}
  {\rm dist}(X,\mathcal {F})\leq  \sqrt{1+n_1n_2}\sum_{i=\kappa+1}^n\sigma_i(X)\quad {\rm for}\
   \|X\|_\infty\!\leq \gamma.\nonumber
  \end{equation}

\medskip
 
 {\bf (2) Rank constrained density matrix optimization problems.}
 The feasible set of this class of rank constrained optimization problems takes the following form
  \begin{equation}\label{F-trace}
    \mathcal{F}:=\Big\{X\in\mathbb{H}_{+}^{n}\ |\ {\rm rank}(X)\le\kappa,\ {\rm tr}(X)=1\Big\},
  \end{equation}
  we provide a Lipschitzian bound for the error ${\rm dist}(X,\mathcal {F})$
  with $X\in\mathbb{H}_{+}^{n}$ and ${\rm tr}(X)=1$.
  \begin{proposition}\label{prop-trace}
  Let $\Omega=\big\{X\in\mathbb{H}_{+}^{n}\ |\ {\rm tr}(X)=1\big\}$.
  Then, for any $X\in\Omega$, by assuming that $X$ has the eigenvalue decomposition
  of the form $\sum_{i=1}^n\lambda_i(X)u_iu_i^{\mathbb{T}}$, it holds that
  \begin{equation*}
   {\rm dist}(X,\mathcal {F})\le \sqrt{\|X-\Pi_{\mathcal{R}}(X)\|_F^2+\|X-\Pi_{\mathcal{R}}(X)\|_*^2}\quad{\rm with}\
   \Pi_{\mathcal{R}}(X):=\sum_{i=1}^{\kappa}\lambda_i(X)u_iu_i^{\mathbb{T}}.
  \end{equation*}
  In particular, the inclusion $\Gamma(t)\subseteq \Gamma(0)+\sqrt{2}\,|t|\,\mathbb{B}_{\mathbb{H}^n}$
  holds for any $t\in\mathbb{R}$.
 \end{proposition}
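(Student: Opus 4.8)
The plan is to follow the strategy of Proposition~\ref{prop-ball}: for an arbitrary $X\in\Omega$ I exhibit one explicit feasible point $\widehat{X}_{\mathcal{F}}\in\mathcal{F}=\Gamma(0)$ and bound $\|X-\widehat{X}_{\mathcal{F}}\|_F$ from above by the claimed quantity. Since $X\in\mathbb{H}_+^n$, its singular values coincide with its eigenvalues, so $\Pi_{\mathcal{R}}(X)=\sum_{i=1}^{\kappa}\lambda_i(X)u_iu_i^{\mathbb{T}}$ is exactly the truncated SVD, the residual $X-\Pi_{\mathcal{R}}(X)=\sum_{i=\kappa+1}^{n}\lambda_i(X)u_iu_i^{\mathbb{T}}$ is positive semidefinite, and $\|X-\Pi_{\mathcal{R}}(X)\|_*=\sum_{i=\kappa+1}^{n}\lambda_i(X)=1-s$, where $s:={\rm tr}(\Pi_{\mathcal{R}}(X))=\sum_{i=1}^{\kappa}\lambda_i(X)$. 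Because ${\rm tr}(X)=1$ forces $\lambda_1(X)>0$, we have $s\ge\lambda_1(X)>0$, so
\[
 \widehat{X}_{\mathcal{F}}:=\tfrac{1}{s}\,\Pi_{\mathcal{R}}(X)
\]
is well defined; it is positive semidefinite, has rank at most $\kappa$, and has trace $1$, hence $\widehat{X}_{\mathcal{F}}\in\mathcal{F}$.

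The second step is the error estimate. I decompose $X-\widehat{X}_{\mathcal{F}}=\big[X-\Pi_{\mathcal{R}}(X)\big]+\big[\Pi_{\mathcal{R}}(X)-\widehat{X}_{\mathcal{F}}\big]$; the first term is supported on $u_{\kappa+1},\dots,u_n$ while the second equals $(1-\tfrac1s)\Pi_{\mathcal{R}}(X)$ and is supported on $u_1,\dots,u_{\kappa}$, so the two terms are orthogonal in the trace inner product and
\[
 \|X-\widehat{X}_{\mathcal{F}}\|_F^2=\|X-\Pi_{\mathcal{R}}(X)\|_F^2+\Big(\tfrac{1-s}{s}\Big)^{2}\|\Pi_{\mathcal{R}}(X)\|_F^2 .
\]
The key inequality is that, since the $\lambda_i(X)$ are nonnegative, $\|\Pi_{\mathcal{R}}(X)\|_F^2=\sum_{i=1}^{\kappa}\lambda_i(X)^2\le\big(\sum_{i=1}^{\kappa}\lambda_i(X)\big)^2=s^2$; hence $\big(\tfrac{1-s}{s}\big)^2\|\Pi_{\mathcal{R}}(X)\|_F^2\le(1-s)^2=\|X-\Pi_{\mathcal{R}}(X)\|_*^2$. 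Plugging this into the last display and using ${\rm dist}(X,\mathcal{F})\le\|X-\widehat{X}_{\mathcal{F}}\|_F$ gives the asserted bound.

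For the inclusion $\Gamma(t)\subseteq\Gamma(0)+\sqrt{2}\,|t|\,\mathbb{B}_{\mathbb{H}^n}$: if $t<0$ then $\Gamma(t)=\emptyset$ because $\sum_{i=\kappa+1}^{n}\sigma_i(\cdot)\ge 0$, so there is nothing to prove; if $t\ge 0$, take any $Z\in\Gamma(t)\subseteq\Omega$, apply the bound just proved to $Z$, and note that $\|Z-\Pi_{\mathcal{R}}(Z)\|_*=\sum_{i=\kappa+1}^{n}\sigma_i(Z)=t$ and $\|Z-\Pi_{\mathcal{R}}(Z)\|_F\le\|Z-\Pi_{\mathcal{R}}(Z)\|_*=t$, so ${\rm dist}(Z,\Gamma(0))\le\sqrt{t^2+t^2}=\sqrt{2}\,|t|$, i.e.\ $Z\in\Gamma(0)+\sqrt{2}\,|t|\,\mathbb{B}_{\mathbb{H}^n}$.

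I do not expect a real obstacle here. The only non-routine point is the choice of $\widehat{X}_{\mathcal{F}}$: unlike the ball case, where shrinking the truncation toward the origin restores feasibility, on the trace-one slice one must rescale $\Pi_{\mathcal{R}}(X)$ by its own trace, after which everything reduces to the elementary estimate $\sum_{i=1}^{\kappa}\lambda_i^2\le(\sum_{i=1}^{\kappa}\lambda_i)^2$ for nonnegative numbers together with the norm inequality $\|\cdot\|_F\le\|\cdot\|_*$.
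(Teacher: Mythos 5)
Your proposal is correct and follows essentially the same route as the paper: rescale the truncated spectral decomposition by its trace to land on the trace-one slice, use the orthogonality of the two pieces to split the Frobenius error, and bound the correction term via $\|\Pi_{\mathcal{R}}(X)\|_F\le{\rm tr}(\Pi_{\mathcal{R}}(X))$ together with $1-{\rm tr}(\Pi_{\mathcal{R}}(X))=\|X-\Pi_{\mathcal{R}}(X)\|_*$. The derivation of the inclusion from the distance bound (including the explicit dismissal of $t<0$) also matches the paper's argument.
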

 \begin{proof}
  Let $X$ be an arbitrary point from $\Omega$ with $X=\sum_{i=1}^n\lambda_i(X)u_iu_i^\mathbb{T}$. Define
  \begin{equation}\label{XF-trace}
    \widehat{X}_{\mathcal{F}}
    :=\frac{\Pi_\mathcal {R}(X)}{{\rm tr}(\Pi_\mathcal {R}(X))}.
  \end{equation}
  Notice that ${\rm tr}(\Pi_\mathcal {R}(X))>0$ since $\lambda_1(X)>0$.
  Hence, $\widehat{X}_\mathcal {F}$ is well defined
  and $\widehat{X}_\mathcal {F}\in \mathcal {F}$.
  Thus, by the definition of $\widehat{X}_{\mathcal{F}}$, we have
  \begin{align}
  ({\rm dist}(X,\mathcal {F}))^2\le\|X-\widehat{X}_{\mathcal{F}}\|_F^2
    &= \Big\|X-\Pi_\mathcal {R}(X)+\Pi_\mathcal {R}(X)\Big(1-\frac{1}{{\rm tr}(\Pi_\mathcal {R}(X))}\Big)\Big\|_F^2\nonumber\\
    & = \|X-\Pi_\mathcal {R}(X)\|_F^2 +\|\Pi_\mathcal {R}(X)\|_F^2\Big(1-\frac{1}{{\rm tr}(\Pi_\mathcal {R}(X))}\Big)^2\nonumber\\
    &= \|X-\Pi_\mathcal {R}(X)\|_F^2 +\Big(\frac{\|\Pi_\mathcal {R}(X)\|_F(1-{\rm tr}(\Pi_\mathcal {R}(X)))}{{\rm tr}(\Pi_\mathcal {R}(X))}\Big)^2\nonumber\\
    &\le \|X-\Pi_\mathcal {R}(X)\|_F^2+\|X-\Pi_\mathcal {R}(X)\|_*^2\nonumber
  \end{align}
  where the inequality is due to ${\rm tr}(\Pi_\mathcal {R}(X))\!=\!\|\Pi_\mathcal {R}(X)\|_*\!\geq\! \|\Pi_\mathcal {R}(X)\|_F$
  and $1-{\rm tr}(\Pi_\mathcal {R}(X))=\|X\|_*-\|\Pi_\mathcal {R}(X)\|_*=\|X-\Pi_\mathcal {R}(X)\|_*$.
  Thus, we complete the proof of the inequality.

  \medskip

  Now let $Z$ be an arbitrary point from $\Gamma(t)$. Noting that $Z\in\Gamma(t)\subseteq\Omega$, we have
  \[
    {\rm dist}(Z,\Gamma(0))\le \|Z-\widehat{Z}_{\mathcal{F}}\|_F
    \le \sqrt{2}\|Z-\Pi_\mathcal {R}(Z)\|_*=\sqrt{2}\sum_{i=\kappa+1}^n\sigma_i(Z)=\sqrt{2} t
  \]
  where $\widehat{Z}_{\mathcal{F}}$ is defined as in \eqref{XF-trace},
  and the second equality is due to $Z\in\Gamma(t)$.
  This shows that $Z\in \Gamma(0)+\sqrt{2} t\mathbb{B}_{\mathbb{H}^n}$.
  From the arbitrariness of $t$, the desired inclusion follows.
  \end{proof}

\medskip

 {\bf (3) Rank constrained correlation matrix optimization problems.}
 The feasible set of this class of rank constrained optimization problems takes the following form
 \begin{equation}\label{F-corr}
    \mathcal{F}:=\Big\{X\in\mathbb{H}_{+}^{n}\ |\ {\rm rank}(X)\le\kappa,\ {\rm diag}(X)=e\Big\}
 \end{equation}
 where $e\in\mathbb{R}^n$ is the vector of all ones. The following proposition provides
 a Lipschitzian bound for the error ${\rm dist}(X,\mathcal {F})$ with
 $X\in\mathbb{H}_{+}^{n}$ and ${\rm diag}(X)=e$.
 \begin{proposition}\label{prop-corr}
  Let $\Omega=\{X\in \mathbb{H}^{n}_+\ |\ {\rm diag}(X)=e\}$. Then,
  for any $X\in\Omega$, by assuming that $X$ has the eigenvalue decomposition
  of the form $\sum_{i=1}^n\lambda_i(X)u_iu_i^{\mathbb{T}}$, it holds that
  \begin{equation}\label{ineq-diag}
  {\rm dist}(X,\mathcal {F})\le (1+2n)\|X-\Pi_{\mathcal{R}}(X)\|_*\quad {\rm with}\ \
  \Pi_{\mathcal{R}}(X):=\sum_{i=1}^{\kappa}\lambda_i(X)u_iu_i^{\mathbb{T}}.
  \end{equation}
  In particular, the inclusion $\Gamma(t)\subseteq \Gamma(0)+(1+2n)\,|t|\,\mathbb{B}_{\mathbb{H}^n}$
  holds for any $t\in\mathbb{R}$.
  \end{proposition}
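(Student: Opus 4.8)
The plan is to follow the pattern of Propositions~\ref{prop-ball} and \ref{prop-trace}: given $X\in\Omega$ I would exhibit an explicit member $\widehat{X}_{\mathcal{F}}$ of $\mathcal{F}=\Gamma(0)$ that lies within a controlled multiple of $r:=\|X-\Pi_{\mathcal{R}}(X)\|_*=\sum_{i=\kappa+1}^{n}\lambda_i(X)$ of $X$, and then read off the inclusion for $\Gamma(t)$ exactly as there. Writing $G:=\Pi_{\mathcal{R}}(X)=\sum_{i=1}^{\kappa}\lambda_i(X)u_iu_i^{\mathbb{T}}$, the matrix $X-G=\sum_{i=\kappa+1}^{n}\lambda_i(X)u_iu_i^{\mathbb{T}}$ is positive semidefinite with trace $r$, so $G_{jj}=1-(X-G)_{jj}\in[1-r,1]$ for every $j$; thus $G$ is already ``almost'' a correlation matrix, and the obvious repair is to rescale its rows and columns so that the diagonal becomes $e$. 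The complication, absent in the density-matrix case where the single trace constraint is automatically positive, is that $G_{jj}$ can vanish (only when $r\ge 1$), which forces a case distinction on the size of $r$.

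For $r\ge 1$ I would argue crudely: $X$ and every matrix in the (nonempty, since $ee^{\mathbb{T}}\in\mathcal{F}$ when $\kappa\ge 1$) set $\mathcal{F}$ have nuclear norm $n$, so picking any $\widehat{X}_{\mathcal{F}}\in\mathcal{F}$ yields ${\rm dist}(X,\mathcal{F})\le\|X-\widehat{X}_{\mathcal{F}}\|_*\le 2n\le(1+2n)r$. For $r<1$ all $G_{jj}\ge 1-r>0$, so with $D:={\rm Diag}(G_{11},\dots,G_{nn})$ the matrix $\widehat{X}_{\mathcal{F}}:=D^{-1/2}GD^{-1/2}$ is well defined; it is positive semidefinite (a congruence of $G\succeq 0$), has rank at most ${\rm rank}(G)\le\kappa$, and has diagonal $e$, so $\widehat{X}_{\mathcal{F}}\in\mathcal{F}$. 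It then remains to bound $\|\widehat{X}_{\mathcal{F}}-G\|_*$.

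Writing $D^{-1/2}=I+E$ with $E$ the nonnegative diagonal matrix whose entries are $G_{jj}^{-1/2}-1$, one has $\widehat{X}_{\mathcal{F}}-G=EG+GE+EGE$. Since $EG=\sum_jE_{jj}e_je_j^{\mathbb{T}}G$ and $\|e_je_j^{\mathbb{T}}G\|_*$ equals the Euclidean norm of the $j$-th row of $G$, which by $G_{jk}^2\le G_{jj}G_{kk}$ is at most $\sqrt{G_{jj}\,{\rm tr}(G)}$, I would bound $\|EG\|_*=\|GE\|_*\le\sqrt{{\rm tr}(G)}\,\sum_j(1-G_{jj}^{1/2})\le\sqrt{n}\,\sum_j(1-G_{jj})=\sqrt{n}\,r$, using ${\rm tr}(G)=n-r\le n$, the inequality $1-G_{jj}^{1/2}\le 1-G_{jj}$ on $[0,1]$, and $\sum_j(1-G_{jj})={\rm tr}(X-G)=r$. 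For the remaining term, $EGE\succeq 0$ and ${\rm tr}(EGE)=\sum_jG_{jj}E_{jj}^2=\sum_j(1-G_{jj}^{1/2})^2\le r^2$. Hence ${\rm dist}(X,\mathcal{F})\le\|X-G\|_*+\|\widehat{X}_{\mathcal{F}}-G\|_*\le r+2\sqrt{n}\,r+r^2<(2+2\sqrt{n})r\le(1+2n)r$, the last step holding for $n\ge 2$ (for $n=1$ the statement is trivial, $\Omega$ being a singleton). Finally the ``in particular'' inclusion follows verbatim as in the previous two proofs: $\Gamma(t)=\emptyset$ for $t<0$, while for $t\ge 0$ every $Z\in\Gamma(t)$ has $\|Z-\Pi_{\mathcal{R}}(Z)\|_*=t$, so ${\rm dist}(Z,\Gamma(0))\le(1+2n)t$ and $Z\in\Gamma(0)+(1+2n)\,t\,\mathbb{B}_{\mathbb{H}^n}$.

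The main obstacle is precisely the normalization step. Unlike the ball case (a scalar scaling) and the density-matrix case (one always-positive trace constraint), here the $n$ separate diagonal constraints may all be violated at once, and when $r\ge 1$ the matrix $D^{-1/2}$ genuinely fails to exist; so the Lipschitzian bound must be assembled from a local scaling estimate for $r<1$ and a global diameter estimate for $r\ge 1$, and one has to verify that the constants from the two regimes are both at most $1+2n$. The secondary delicate point is squeezing the cross term down to $\sqrt{n}\,r$, rather than a cruder $nr/\sqrt{1-r}$-type estimate, since only then does the final constant fit under $1+2n$; the row-norm together with the positive-semidefinite-minor bound above is the device that achieves this.
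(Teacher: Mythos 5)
Your proof is correct and follows essentially the same route as the paper: you construct the identical feasible point $D^{-1/2}\Pi_{\mathcal{R}}(X)D^{-1/2}$ by diagonal rescaling, with a crude fallback when the diagonal of $\Pi_{\mathcal{R}}(X)$ degenerates, and deduce the inclusion for $\Gamma(t)$ in the same way. The only differences are in the bookkeeping — the paper splits into the cases ``some $[{\rm diag}\,\Pi_{\mathcal{R}}(X)]_j=0$'', $r\le 0.5$, and $r>0.5$ and uses an entrywise Frobenius estimate via $\max_j(D_{jj}^2-1)$, whereas you split at $r=1$ and use the sharper nuclear-norm expansion $EG+GE+EGE$ — but both land on the same constant $1+2n$.
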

  \begin{proof}
   Let $X$ be an arbitrary point from the set $\Omega$ with $X=\sum_{i=1}^n\lambda_i(X)u_iu_i^\mathbb{T}$. Define
   \begin{equation}\label{XF-corr}
   \widehat{X}_{\mathcal{F}}:=\!\left\{\!
                 \begin{array}{cl}\!
                  D(X)\Pi_\mathcal {R}(X)D(X) & \textrm{if}\ {\rm diag}(\Pi_\mathcal {R}(X))\!>\!0\\
                   ee^\mathbb{T}\ & \textrm{otherwise}
                 \end{array}
                 \right.
  \end{equation}
  with $D(X)={\rm Diag}\Big(\frac{1}{\sqrt{{\rm diag}(\Pi_{\mathcal{R}}(X))}}\Big)$.
  From the expression of $\Pi_{\mathcal{R}}(X)$, it follows that
  \begin{equation}\label{X-mPCA1}
   [{\rm diag}(\Pi_\mathcal {R}(X))]_j \!= \! 1-\!\sum_{i=\kappa+1}^n\!\lambda_i(X)|u_{ij}|^2\geq 0,\quad j\!=\!1,2,\ldots,n
  \end{equation}
  where $u_{ij}$ means the $j$th entry of $u_i$.
  It is easy to check that $\widehat{X}_{\mathcal{F}}\in\mathcal {F}$, which implies that
  ${\rm dist}(X,\mathcal {F})\le \|X-\widehat{X}_{\mathcal{F}}\|_F$.
  Thus, to prove the desired result, we only need to argue that
  \begin{equation}\label{aim-ineq3}
    \|X-\widehat{X}_{\mathcal{F}}\|_F \le  (1+2n)\|X-\Pi_{\mathcal{R}}(X)\|_*.
  \end{equation}
  If there is an index $j$ such that $[{\rm diag}(\Pi_\mathcal {R}(X))]_j=0$,
  by the definition of $\widehat{X}_{\mathcal{F}}$ we have
  \[
    \|X-\widehat{X}_{\mathcal{F}}\|_F=\|X-ee^\mathbb{T}\|_F
    \le\|X\|_F+\|ee^\mathbb{T}\|_F\le n+{n}\le 2n{\textstyle\sum_{i=\kappa+1}^n\lambda_i(X)},
  \]
  where the last inequality is since
  $\sum_{i=\kappa+1}^n\lambda_i(X)\geq\sum_{i=\kappa+1}^n\lambda_i(X)|u_{ij}|^2 =1$.
  Thus, \eqref{aim-ineq3} follows. We next consider the case where ${\rm diag}(\Pi_\mathcal {R}(X))>0$.
  For convenience, we denote $D(X)$ by $D$ and write its $i$th diagonal entry as $D_{ii}$.
  From \eqref{X-mPCA1}, it is clear that $D_{jj}\geq 1$ for all $j$. Notice that
  $(\widehat{X}_{\mathcal{F}})_{kl}=(D\Pi_\mathcal {R}(X)D)_{kl}=D_{kk}D_{ll}(\Pi_\mathcal {R}(X))_{kl}$. Then,
  \begin{align}\label{X-mPCA2}
   \big\|\widehat{X}_{\mathcal{F}}-\Pi_\mathcal {R}(X)\big\|_F^2&=\sum_{k=1}^n\sum_{l=1}^n\big((\widehat{X}_{\mathcal{F}})_{kl}\!-\!(\Pi_\mathcal {R}(X))_{kl}\big)^2 = \sum_{k=1}^n\sum_{l=1}^n\big(D_{kk}D_{ll}\!-\!1\big)^2[(\Pi_\mathcal {R}(X))_{kl}]^2\nonumber\\
   & \leq \max_{1\leq j\leq n}\big(D_{jj}^2-1\big)^2\big\|\Pi_\mathcal {R}(X)\big\|_F^2 \leq n^2\max_{1\leq j\leq n}\big(D_{jj}^2-1\big)^2,
  \end{align}
  where the last inequality is due to $\|\Pi_\mathcal {R}(X)\|_F\leq \|X\|_F\leq n$.
   By using \eqref{X-mPCA2}, we have
  \begin{align}\label{X-mPCA4}
   \|X-\widehat{X}_{\mathcal{F}}\|_F&\leq\|X-\Pi_\mathcal {R}(X)\|_{F}+\|\Pi_\mathcal {R}(X)-\widehat{X}_{\mathcal{F}}\|_{F}\nonumber\\
   &\le \|X-\Pi_\mathcal {R}(X)\|_{F}+n\max_{1\leq j\leq n}\big(D_{jj}^2-1\big)\nonumber\\
   &= \|X-\Pi_\mathcal {R}(X)\|_{F}+n\max_{1\leq j\leq n}\Big(\frac{1}{[{\rm diag}(\Pi_\mathcal {R}(X))]_j}-1\Big)\nonumber\\
   &\leq \|X-\Pi_\mathcal {R}(X)\|_{F}+\frac{n\sum_{i=\kappa+1}^n\!\lambda_i(X)}{\min_{1\leq j\leq n}[{\rm diag}(\Pi_\mathcal {R}(X))]_j}\nonumber\\
   &\le \sum_{i=\kappa+1}^n\!\lambda_i(X)\Big(1+\frac{n}{\min_{1\leq j\leq n}[{\rm diag}(\Pi_\mathcal {R}(X))]_j}\Big),
 \end{align}
 where the third inequality is due to $|u_{ij}|\leq 1$. If $\sum_{i=\kappa+1}^n\lambda_i(X)\leq0.5$,
 equation \eqref{X-mPCA1} implies that $[{\rm diag}(\Pi_\mathcal {R}(X))]_j\ge 0.5$ for all $j$.
 Then by using \eqref{X-mPCA4} we obtain
 \[
   \|X-\widehat{X}_{\mathcal{F}}\|_F\leq (1+2n){\textstyle\sum_{i=\kappa+1}^n\lambda_i(X)}.
 \]
 Now, we assume that $\sum_{i=\kappa+1}^n\lambda_i(X)>0.5$. Since $(\widehat{X}_{\mathcal{F}})_{kl}=D_{kk}D_{ll}(\Pi_\mathcal {R}(X))_{kl}$ and $D_{kk}\geq 1$ for all $k$, we have
 \(
  \|\widehat{X}_{\mathcal{F}}-\Pi_\mathcal {R}(X)\|_F\leq \|\widehat{X}_{\mathcal{F}}\|_F\leq n.
 \)
 Consequently,
 \begin{align}
   \|X-\widehat{X}_{\mathcal{F}}\|_F\leq\|X-\Pi_\mathcal {R}(X)\|_{F}+\|\Pi_\mathcal {R}(X)-\widehat{X}_{\mathcal{F}}\|_{F}
   \leq \|X-\Pi_\mathcal {R}(X)\|_*+ n,\nonumber
 \end{align}
 which along with $\sum_{i=\kappa+1}^n\lambda_i(X)>0.5$ implies that
 $\|X-\widehat{X}_{\mathcal{F}}\|_F\leq (1+2n)\sum_{i=\kappa+1}^n\lambda_i(X)$.
 Thus, we show that inequality \eqref{aim-ineq3} holds.
 The first part of the conclusions follows. By inequality \eqref{aim-ineq3},
 using the same arguments as those for the second part of Proposition \ref{prop-trace},
 we obtain the desired inclusion. The proof is completed.
 \end{proof}

 \medskip

 From Propositions \ref{prop-ball}-\ref{prop-corr}, we see that for the above three class of
 rank constrained problems, there exists a constant $\alpha$ such that the associated
 multifunction $\Gamma$ satisfies
 \[
   \Gamma(t)\subseteq \Gamma(0)+\alpha\,|t|\,\mathbb{B}_{\mathbb{X}}\quad\ \forall t\in\mathbb{R}.
 \]
 By \cite[Definition 1]{Robinson79}, this is actually the upper Lipschitzian
 at $0$ of $\Gamma$ with respect to the set $\mathbb{R}$, which clearly implies
 the calmness of $\Gamma$ at $0$ for each $X\in\Gamma(0)$.

 \medskip

 Next, we establish a local error bound for estimating the distance
 ${\rm dist}(X,\mathcal{F}^*)$ with $X\in \Omega$, under the calmness of $\Gamma$ at $0$
 for each $X\in\Gamma(0)$ and a suitable assumption on $f$.
 \begin{theorem}\label{theorem2-local1}
 Suppose that the multifunction $\Gamma$ in \eqref{Gamma} is calm at $0$ for each $X\in\Gamma(0)$,
 and $f$ is a smooth convex function such that
  \begin{equation}\label{vtheta}
   \vartheta:=\inf_{X\in \mathcal{F}, Y\in \mathcal{F}^*, X\neq Y} \frac{f(X)-f(Y)-\langle \nabla f(Y), X-Y\rangle}{\|X-Y\|_F^2}>0.
 \end{equation}
  Then, there exists a constant $c>0$ such that for any $X\in\Omega$,
  \begin{equation}\label{slocal-bound}
   {\rm dist}(X,\mathcal {F}^*)\le c\sum_{i=\kappa+1}^n\!\sigma_i(X) +\frac{1}{\vartheta}\|\nabla f(\Pi_{\mathcal{F}}(X))-\nabla f(X^*)\|_F,
   \end{equation}
  where $X^*$ is an arbitrary point from $\mathcal{F}^*$.
  When $\mathcal{F}$ takes the form of \eqref{F-ball}, \eqref{F-trace} and \eqref{F-corr}, respectively,
  the constant $c$ can be specified as $c\!=\!\sqrt{1\!+\!c_u^2/c_l^2},\sqrt{2}$ and $1\!+\!2n$, respectively.
 \end{theorem}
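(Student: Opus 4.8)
The plan is to decompose ${\rm dist}(X,\mathcal{F}^*)$ into a feasibility error and an optimality gap. Writing $\bar{X}:=\Pi_{\mathcal{F}}(X)$, which belongs to $\mathcal{F}=\Omega\cap\mathcal{R}$ (nonempty and closed, so the projection exists), the triangle inequality gives ${\rm dist}(X,\mathcal{F}^*)\le\|X-\bar{X}\|_F+{\rm dist}(\bar{X},\mathcal{F}^*)={\rm dist}(X,\mathcal{F})+{\rm dist}(\bar{X},\mathcal{F}^*)$, so it suffices to bound the two terms on the right separately.

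For the feasibility term I would apply Theorem~\ref{theorem2.1}: since $\Gamma$ is calm at $0$ for each point of $\Gamma(0)=\mathcal{F}$, there is a constant $c>0$ with ${\rm dist}(Z,\mathcal{F})\le c\sum_{i=\kappa+1}^n\sigma_i(Z)$ for all $Z\in\Omega$; taking $Z=X$ gives $\|X-\bar{X}\|_F\le c\sum_{i=\kappa+1}^n\sigma_i(X)$. For the feasible sets \eqref{F-ball}, \eqref{F-trace} and \eqref{F-corr}, Propositions~\ref{prop-ball}--\ref{prop-corr} exhibit such a $c$ explicitly (respectively $\sqrt{1+c_u^2/c_l^2}$, $\sqrt{2}$ and $1+2n$), which yields the stated specialization of the constant.

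The main content is the optimality gap ${\rm dist}(\bar{X},\mathcal{F}^*)$. Fix an arbitrary $X^*\in\mathcal{F}^*$; if $\bar{X}=X^*$ there is nothing to prove, so assume $\bar{X}\neq X^*$. Because $\bar{X}\in\mathcal{F}$ and $X^*\in\mathcal{F}^*$, the definition \eqref{vtheta} of $\vartheta$ yields
\[
  \vartheta\,\|\bar{X}-X^*\|_F^2\le f(\bar{X})-f(X^*)-\langle\nabla f(X^*),\bar{X}-X^*\rangle .
\]
I would then invoke the gradient inequality for the convex function $f$ at $\bar{X}$, i.e.\ $f(\bar{X})-f(X^*)\le\langle\nabla f(\bar{X}),\bar{X}-X^*\rangle$, to replace the right-hand side by $\langle\nabla f(\bar{X})-\nabla f(X^*),\bar{X}-X^*\rangle$, and Cauchy--Schwarz to conclude $\vartheta\,\|\bar{X}-X^*\|_F\le\|\nabla f(\bar{X})-\nabla f(X^*)\|_F$. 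Since $X^*\in\mathcal{F}^*$ we have ${\rm dist}(\bar{X},\mathcal{F}^*)\le\|\bar{X}-X^*\|_F$, and substituting into the triangle inequality above produces \eqref{slocal-bound}.

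The step that I expect to require the most attention is the quantifier over $X^*$: the bound is asserted for \emph{every} $X^*\in\mathcal{F}^*$, not merely for a point of $\mathcal{F}^*$ nearest to $\bar{X}$, and since $\mathcal{F}^*$ is the solution set of a \emph{nonconvex} problem one cannot assume $\nabla f$ is constant on it. The argument is unaffected because it uses only the elementary estimate ${\rm dist}(\bar{X},\mathcal{F}^*)\le\|\bar{X}-X^*\|_F$, valid for any $X^*\in\mathcal{F}^*$. A secondary point is that $\Pi_{\mathcal{F}}(X)$ need not be unique since $\mathcal{F}$ is nonconvex; only the convention from the notation list that $\Pi_{\mathcal{F}}(X)$ denotes some nearest feasible point is needed, and $\mathcal{F}^*\neq\emptyset$ follows from the compactness of $\Omega$ together with the continuity of $f$.
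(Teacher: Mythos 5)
Your proposal is correct and follows essentially the same route as the paper: bound $\|X-\Pi_{\mathcal{F}}(X)\|_F$ via Theorem~\ref{theorem2.1} (with the explicit constants from Propositions~\ref{prop-ball}--\ref{prop-corr}), and bound $\|\Pi_{\mathcal{F}}(X)-X^*\|_F$ by combining the definition of $\vartheta$ with the convexity gradient inequality and Cauchy--Schwarz. The only cosmetic difference is that you pass through ${\rm dist}(\Pi_{\mathcal{F}}(X),\mathcal{F}^*)$ while the paper writes ${\rm dist}(X,\mathcal{F}^*)\le\|X-X^*\|_F$ directly and then applies the triangle inequality, which is the same estimate.
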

 \begin{proof}
  From the definition of $\vartheta$ and the convexity of the function $f$, it follows that
  \begin{align*}
  \vartheta\|X^*-\Pi_{\mathcal{F}}(X)\|_F^2&\leq f(\Pi_{\mathcal{F}}(X))-f(X^*)-\langle \nabla f(X^*), \Pi_{\mathcal{F}}(X)-X^*\rangle\nonumber\\
   &\leq \big\langle \nabla f(\Pi_{\mathcal{F}}(X))-\nabla f(X^*), \Pi_{\mathcal{F}}(X)-X^*\big\rangle\nonumber\\
   &\leq \|\nabla f(\Pi_{\mathcal{F}}(X))-\nabla f(X^*)\|_F\|\Pi_{\mathcal{F}}(X)-X^*\|_F,
  \end{align*}
  which implies that
  \(
   \|X^*-\Pi_{\mathcal{F}}(X)\|_F\leq  \frac{1}{\vartheta}\|\nabla f(\Pi_{\mathcal{F}}(X))-\nabla f(X^*)\|_F.
  \)
  Thus, we have that
  \begin{align}\label{theorem211-ball11}
  {\rm dist}(X, \mathcal {F}^*) &\le \|X-X^*\|_F \le \|X-\Pi_{\mathcal{F}}(X)\|_F + \|X^*-\Pi_{\mathcal{F}}(X)\|_F\nonumber\\
  &\leq c\sum_{i=\kappa+1}^n\!\sigma_i(X) +\frac{1}{\vartheta}\|\nabla f(\Pi_{\mathcal{F}}(X))-\nabla f(X^*)\|_F,
 \end{align}
  where the last inequality is using Theorem \ref{theorem2.1}.
  By Propositions \ref{prop-ball}-\ref{prop-corr}, when $\mathcal{F}$ is given by \eqref{F-ball}, \eqref{F-trace} and \eqref{F-corr},
  $c$ can be specified as $\sqrt{1\!+\!c_u^2/c_l^2},\sqrt{2}$ and $1\!+\!2n$, respectively.
 \end{proof}
 \begin{remark}
  {\bf (a)} The assumption $\vartheta>0$ in Theorem \ref{theorem2-local1} is a kind of restricted strong
  convexity condition, which extends the sparse reconstruction condition used for
  the analysis of sparse constrained optimization problems (see, e.g., \cite{Bahmani-GSCO, Yuan-GHT}).
  One can check that
  \begin{align}\label{RSC1}
    0<\inf_{1\le{\rm rank}([X\ Y])\le 2\kappa, X\neq Y} \frac{f(X)-f(Y)-\langle \nabla f(Y), X-Y\rangle}{\|X-Y\|_F^2}
  \end{align}
  implies $\vartheta>0$, where \eqref{RSC1} is an extension of sparse reconstruction condition in \cite{Bahmani-GSCO,Yuan-GHT}.

  \medskip
  \noindent
  {\bf (b)} When $f$ is twice continuously differentiable, it is not difficult to check that
  \[
    \frac{1}{\vartheta}\|\nabla f(\Pi_{\mathcal{F}}(X))-\nabla f(X^*)\|_F
    \le \frac{\vartheta_{\rm max}}{\vartheta_{\rm min}}\|\Pi_{\mathcal{F}}(X)-X^*\|_F
  \]
  where
  $\vartheta_{\rm min}=\min_{X\in\mathcal{F}}\lambda_{\rm min}(\nabla^2f(X))$
  and $\vartheta_{\rm max}=\max_{X\in\mathcal{F}}\lambda_{\rm max}(\nabla^2f(X))$.
  This shows that the bound in \eqref{slocal-bound} is related to the condition number
  of the Hessian matrix $\nabla^2f(X)$ restricted over the set $\mathcal{F}$,
  which is clearly smaller than the condition number of $\nabla^2f(X)$ restricted
  over the set $\mathcal{R}$. Together with Theorem \ref{theorem2.1},
  we actually have that
  \[
    \frac{1}{\vartheta}\|\nabla f(\Pi_{\mathcal{F}}(X))-\nabla f(X^*)\|_F
    \le \frac{\vartheta_{\rm max}}{\vartheta_{\rm min}}\big(c\sum_{i=\kappa+1}^n\!\sigma_i(X)+\|X\!-\!X^*\|_F\big)\ \ \forall\,X\in \Omega.
  \]
  \end{remark}

  \subsection{Global error bounds}\label{subsec2.2}

  Generally, for a nonconvex feasible set, it is almost impossible to get a global error
  bound for estimating the distance of any point to the feasible set (see \cite{Pang97-Exp}).
  However, under the calmness of $\Gamma$ at $0$ for each $X\in\Gamma(0)$, we can establish
  a global error bound for estimating the distance from any $X\in\mathbb{X}$ to the feasible set
  $\mathcal{F}$ and the solution set $\mathcal{F}^*$.
 \begin{theorem}\label{theorem1-global}
  Suppose that the multifunction $\Gamma$ in \eqref{Gamma} is calm at $0$ for each $X\in\Gamma(0)$.
  Then, there exists a constant $c>0$ such that for any $X\in\mathbb{X}$,
  \begin{equation}\label{ineq1-global}
   {\rm dist}(X,\mathcal {F})\leq (1+c\sqrt{n}){\rm dist}(X,\Omega) +c\sum_{i=\kappa+1}^n\!\sigma_i(X).
  \end{equation}
  When $\mathcal{F}$ takes the form of \eqref{F-ball}, \eqref{F-trace} and \eqref{F-corr}, respectively,
  the constant $c$ can be specified as $\!\sqrt{1+c_u^2/c_l^2},\sqrt{2}$ and $1+2n$, respectively.
  In particular, for the set $\mathcal{F}$ in \eqref{F-ball}, when $|\!\|\cdot\|\!|$ is unitarily invariant,
  one may take $c=1$.
 \end{theorem}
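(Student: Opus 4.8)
The plan is to reduce the global bound to the local bound of Theorem 2.1 by first projecting an arbitrary $X\in\mathbb{X}$ onto the convex compact set $\Omega$, controlling the error incurred, and then invoking the established estimate on $\Omega$. Concretely, let $\widehat{X}:=\Pi_\Omega(X)$, so that $\|X-\widehat{X}\|_F={\rm dist}(X,\Omega)$ and $\widehat{X}\in\Omega$. By the triangle inequality,
\[
  {\rm dist}(X,\mathcal{F})\le \|X-\widehat{X}\|_F+{\rm dist}(\widehat{X},\mathcal{F})
  ={\rm dist}(X,\Omega)+{\rm dist}(\widehat{X},\mathcal{F}).
\]
Since $\widehat{X}\in\Omega$, Theorem 2.1 (together with the calmness hypothesis) gives a constant $c>0$ with ${\rm dist}(\widehat{X},\mathcal{F})={\rm dist}(\widehat{X},\Gamma(0))\le c\sum_{i=\kappa+1}^n\sigma_i(\widehat{X})$. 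It remains to compare $\sum_{i=\kappa+1}^n\sigma_i(\widehat{X})$ with $\sum_{i=\kappa+1}^n\sigma_i(X)$ plus a term of order ${\rm dist}(X,\Omega)$.

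The main technical point — and the step I expect to be the real obstacle — is the perturbation estimate for the tail sum of singular values. I would use the fact that the map $X\mapsto\sum_{i=\kappa+1}^n\sigma_i(X)$ is Lipschitz: by the Mirsky-type inequality for symmetric gauge functions (or, more elementarily, since $\sum_{i=\kappa+1}^n\sigma_i(\cdot)$ is the composition of the nondecreasing $1$-Lipschitz Ky Fan tail norm with the singular-value map, which is $1$-Lipschitz from $(\mathbb{X},\|\cdot\|_F)$ to $(\mathbb{R}^n,\|\cdot\|_2)$), one obtains
\[
  \Big|\sum_{i=\kappa+1}^n\sigma_i(\widehat{X})-\sum_{i=\kappa+1}^n\sigma_i(X)\Big|
  \le \sqrt{n-\kappa}\;\|\widehat{X}-X\|_F\le \sqrt{n}\,{\rm dist}(X,\Omega).
\]
Hence $\sum_{i=\kappa+1}^n\sigma_i(\widehat{X})\le \sum_{i=\kappa+1}^n\sigma_i(X)+\sqrt{n}\,{\rm dist}(X,\Omega)$, and substituting back yields
\[
  {\rm dist}(X,\mathcal{F})\le {\rm dist}(X,\Omega)+c\sqrt{n}\,{\rm dist}(X,\Omega)+c\sum_{i=\kappa+1}^n\sigma_i(X)
  =(1+c\sqrt{n}){\rm dist}(X,\Omega)+c\sum_{i=\kappa+1}^n\sigma_i(X),
\]
which is exactly \eqref{ineq1-global}. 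One should double-check whether the constant $\sqrt{n}$ can be tightened to $\sqrt{n-\kappa}$; since the statement only claims $\sqrt{n}$, the coarser bound suffices.

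Finally, for the three concrete feasible sets \eqref{F-ball}, \eqref{F-trace}, \eqref{F-corr}, the value of $c$ is already pinned down in Propositions \ref{prop-ball}–\ref{prop-corr} (namely $\sqrt{1+c_u^2/c_l^2}$, $\sqrt{2}$, and $1+2n$), because those propositions establish exactly the inequality ${\rm dist}(Z,\Gamma(0))\le c\sum_{i=\kappa+1}^n\sigma_i(Z)$ for $Z\in\Omega$ with those explicit constants; the unitarily invariant case of \eqref{F-ball} gives $c=1$ by the last sentence of Proposition \ref{prop-ball}. Plugging each into the displayed chain above gives the stated specializations. I would present the argument in this order: (i) project onto $\Omega$ and split via the triangle inequality; (ii) apply Theorem 2.1 on $\Omega$; (iii) prove the Lipschitz perturbation bound for the singular-value tail; (iv) combine and read off the constants from the three propositions.
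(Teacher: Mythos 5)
Your proposal is correct and follows essentially the same route as the paper: project onto $\Omega$, split by the triangle inequality, apply Theorem \ref{theorem2.1}, and then control the change in the tail sum, reading the explicit constants off Propositions \ref{prop-ball}--\ref{prop-corr}. The only (immaterial) difference is in step (iii): you bound the perturbation of $\sum_{i=\kappa+1}^n\sigma_i(\cdot)$ via Mirsky's inequality and Cauchy--Schwarz (getting $\sqrt{n-\kappa}$), whereas the paper uses the identity $\sum_{i=\kappa+1}^n\sigma_i(Z)=\min_{{\rm rank}(Y)\le\kappa}\|Z-Y\|_*$ together with the nuclear-norm triangle inequality and $\|\cdot\|_*\le\sqrt{n}\|\cdot\|_F$; both yield the stated bound.
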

 \begin{proof}
  Let $X$ be an arbitrary point from $\mathbb{X}$. Then, the following inequalities hold
  \begin{align*}
   {\rm dist}(X,\mathcal{F})&\leq\|X-\Pi_{\mathcal {F}}(\Pi_\Omega(X))\|_F\nonumber\\
   &\leq \|X-\Pi_\Omega(X)\|_F+\|\Pi_\Omega(X)-\Pi_{\mathcal {F}}(\Pi_\Omega(X))\|_F\nonumber\\
   &\leq \|X-\Pi_\Omega(X)\|_F + c\big\|\Pi_\Omega(X)-\Pi_{\mathcal {R}}(\Pi_\Omega(X))\big\|_*\nonumber\\
   & = \|X-\Pi_\Omega(X)\|_F + c\min_{{\rm rank}(Y)\leq \kappa}\big\|\Pi_\Omega(X)-Y\big\|_*\nonumber\\
   & \leq \|X-\Pi_\Omega(X)\|_F + c\big(\|\Pi_\Omega(X)-X\|_*+\|X-\Pi_\mathcal {R}(X)\|_*\big)\nonumber\\
    & \leq \|X-\Pi_\Omega(X)\|_F + c\big(\sqrt{n}\|\Pi_\Omega(X)-X\|_F+\|X-\Pi_\mathcal {R}(X)\|_*\big)\nonumber\\
   & = (1+c\sqrt{n}){\rm dist}(X,\Omega)+ c\|X-\Pi_\mathcal {R}(X)\|_*,
  \end{align*}
  where the third inequality is using Theorem \ref{theorem2.1} and the fourth one
   is due to the inequality $\min_{{\rm rank}(Y)\le \kappa}\|\Pi_\Omega(X)-Y\|_*\le\|\Pi_\Omega(X)-\Pi_\mathcal {R}(X)||_*$.
   The proof is completed.
  \end{proof}
 \begin{remark}
  For the term ${\rm dist}(X,\Omega)$ in \eqref{ineq1-global},
  when the set $\mathcal{F}$ is given by \eqref{F-ball}, we have
  \[
   {\rm dist}(X,\Omega)\leq \frac{1}{c_l}\max(|\!\|X\|\!|-\gamma,0)\  \ {\rm for\ any}\ X\in \mathbb{R}^{n_1\times n_2};
  \]
  when $\mathcal{F}$ takes the form of \eqref{F-trace},
  using \cite[Theorem 2.1]{Zhang} with $\mathcal{L}=\{Y\in \mathbb{H}^n\ |\ {\rm tr}(Y)=0\}$,
  $B=\frac{1}{n}I$ and $\mathcal {K}=\mathbb{H}^n_+$ shows that there exists a constant $\nu\ge 1$ such that
  \begin{align}
    {\rm dist}(X,\Omega)&\le \nu\big({\rm dist}(X,B+\mathcal{L})+\|X-\Pi_{\mathbb{H}^n_+}(X)\|_F\big)\nonumber\\
     &= \frac{\nu}{\sqrt{n}}\big|1-{\rm tr}(X)\big|+\nu\|X-\Pi_{\mathbb{H}^n_+}(X)\|_F\  \ {\rm for\ any}\ X\in \mathbb{H}^n;\nonumber
  \end{align}
  and when $\mathcal{F}$ takes the form of \eqref{F-corr},
  using \cite[Theorem 2.1]{Zhang} with $\mathcal{L}=\{Y\in \mathbb{H}^n\ |\ {\rm diag}(Y)=0\}$,
  $B=I$ and $\mathcal {K}=\mathbb{H}^n_+$ yields that there exists a constant $\nu\ge 1$ such that
  \begin{align}
    {\rm dist}(X,\Omega)&\le \nu\big({\rm dist}(X,B+\mathcal{L})+\|X-\Pi_{\mathbb{H}^n_+}(X)\|_F\big)\nonumber\\
    &= \nu\big(\|e-{\rm diag}(X)\|+\|X-\Pi_{\mathbb{H}^n_+}(X)\|_F\big)\  \ {\rm for\ any}\ X\in \mathbb{H}^n.\nonumber
  \end{align}
 \end{remark}

 Now following the same arguments as for inequality \eqref{theorem211-ball11} and
 using Theorem \ref{theorem1-global} to bound $\|X-\Pi_{\mathcal{F}}(X)\|_F$,
 we can establish a global error bound for estimating the distance from any point
 $X\in \mathbb{X}$ to the solution set $\mathcal{F}^*$ under the assumptions of
 Theorem \ref{theorem2-local1}.
 \begin{theorem}\label{theorem2-global}
  Suppose that the multifunction $\Gamma$ in \eqref{Gamma} is calm at $0$ for each $X\in\Gamma(0)$,
  and $f$ is a smooth convex function such that $\vartheta>0$, where $\vartheta$ is defined by
  \eqref{vtheta}. Then, there exists a constant $c>0$ such that for any $X\in \mathbb{X}$,
  \begin{align}
   {\rm dist}(X,\mathcal {F}^*)\!\le\! (1\!+\!c\sqrt{n}){\rm dist}(X,\Omega) +c\sum_{i=\kappa+1}^n\!\sigma_i(X)
   \!+\!\frac{1}{\vartheta}\|\nabla f(\Pi_{\mathcal{F}}(X))-\nabla f(X^*)\|_F,\nonumber
   \end{align}
  where $X^*$ is an arbitrary point from $ \mathcal{F}^*$.
  When $\mathcal{F}$ takes the form of \eqref{F-ball}, \eqref{F-trace} and \eqref{F-corr},
  respectively, the constant $c$ can be specified as $\sqrt{1+c_u^2/c_l^2},\sqrt{2}$ and $1+2n$,
  respectively.
  \end{theorem}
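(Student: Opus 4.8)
The plan is to mimic the chain of inequalities used in the proof of Theorem~\ref{theorem2-local1} (in particular \eqref{theorem211-ball11}), but to replace the \emph{local} bound on ${\rm dist}(X,\mathcal{F})$ coming from Theorem~\ref{theorem2.1} by the \emph{global} bound provided by Theorem~\ref{theorem1-global}. First I would fix an arbitrary $X\in\mathbb{X}$ and an arbitrary $X^*\in\mathcal{F}^*$. Since $\Omega$ is compact and $\mathcal{R}$ is closed, $\mathcal{F}=\Omega\cap\mathcal{R}$ is compact, so a projection point $\Pi_{\mathcal{F}}(X)$ exists and lies in $\mathcal{F}$. Then I would start from the triangle inequality
\[
{\rm dist}(X,\mathcal{F}^*)\le\|X-X^*\|_F\le\|X-\Pi_{\mathcal{F}}(X)\|_F+\|\Pi_{\mathcal{F}}(X)-X^*\|_F .
\]

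Next I would estimate the second term exactly as in the proof of Theorem~\ref{theorem2-local1}: using the convexity and smoothness of $f$ together with the definition \eqref{vtheta} of $\vartheta$ (which applies because $\Pi_{\mathcal{F}}(X)\in\mathcal{F}$ and $X^*\in\mathcal{F}^*$, the degenerate case $\Pi_{\mathcal{F}}(X)=X^*$ being trivial), one gets
\[
\vartheta\|\Pi_{\mathcal{F}}(X)-X^*\|_F^2\le\big\langle\nabla f(\Pi_{\mathcal{F}}(X))-\nabla f(X^*),\,\Pi_{\mathcal{F}}(X)-X^*\big\rangle\le\|\nabla f(\Pi_{\mathcal{F}}(X))-\nabla f(X^*)\|_F\,\|\Pi_{\mathcal{F}}(X)-X^*\|_F ,
\]
hence $\|\Pi_{\mathcal{F}}(X)-X^*\|_F\le\frac{1}{\vartheta}\|\nabla f(\Pi_{\mathcal{F}}(X))-\nabla f(X^*)\|_F$. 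For the first term, observing that $\|X-\Pi_{\mathcal{F}}(X)\|_F={\rm dist}(X,\mathcal{F})$, I would invoke Theorem~\ref{theorem1-global} to obtain
\[
\|X-\Pi_{\mathcal{F}}(X)\|_F\le(1+c\sqrt{n}){\rm dist}(X,\Omega)+c\sum_{i=\kappa+1}^n\sigma_i(X),
\]
with $c$ the constant furnished by Theorem~\ref{theorem1-global}. Adding the two estimates yields the claimed inequality, and the specialization of $c$ to $\sqrt{1+c_u^2/c_l^2}$, $\sqrt{2}$ and $1+2n$ for the feasible sets \eqref{F-ball}, \eqref{F-trace} and \eqref{F-corr} is inherited verbatim from Theorem~\ref{theorem1-global}.

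Since every ingredient is already available, I do not expect a genuine obstacle; the only points requiring a word of care are the existence of the projection $\Pi_{\mathcal{F}}(X)$ onto the nonconvex but compact set $\mathcal{F}$, the legitimacy of invoking \eqref{vtheta} (which forces one to record that $\Pi_{\mathcal{F}}(X)\in\mathcal{F}$ and to handle the degenerate case $\Pi_{\mathcal{F}}(X)=X^*$ separately), and the fact that the constant $c$ appearing in both halves of the bound is the same one --- which is automatic because both halves ultimately rest on Theorem~\ref{theorem2.1} through Theorem~\ref{theorem1-global}.
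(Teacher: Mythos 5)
Your proposal is correct and is exactly the argument the paper intends: it repeats the decomposition and restricted-strong-convexity estimate from the proof of Theorem~\ref{theorem2-local1} (inequality \eqref{theorem211-ball11}) and replaces the local bound on $\|X-\Pi_{\mathcal{F}}(X)\|_F$ with the global one from Theorem~\ref{theorem1-global}. The extra care you note (existence of $\Pi_{\mathcal{F}}(X)$ on the compact nonconvex set $\mathcal{F}$, the degenerate case $\Pi_{\mathcal{F}}(X)=X^*$) is sound but routine.
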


 \section{Applications of local error bounds}\label{sec3}

  This section illustrates the applications of the local Lipschitzian type error bounds
  in establishing the exact penalty for the rank constrained optimization problem \eqref{prob},
  and deriving an error bound for a multi-stage convex relaxation approach to problem \eqref{prob} in
 which $\Omega$ is a ball set, a density matrix set and a correlation matrix set, respectively.
  \subsection{Exact penalty for problem \eqref{prob}}\label{sec3.1}

  With the help of the local error bounds in Theorem \ref{theorem2.1}, we show that
  \eqref{penalty-rc} is an exact penalty problem of \eqref{prob} in the sense that
  the global optimal solution set of \eqref{penalty-rc} coincides with that of problem \eqref{prob}
  when the penalty parameter is over a threshold. This result is stated in the following theorem.
  Its proof technique is similar to that of \cite[Theorem 2.1.2]{Luo96}. Notice that
  the latter focuses on the subanalytic compact set and employs an error bound derived by
  the Lojasiewicz' inequality for subanalytic function, but our result is stated for
  a general compact set $\Omega\cap\mathcal{R}$.
  For completeness, we here include the proof.
  \begin{theorem}\label{epenalty}
   Suppose that the multifunction $\Gamma$ in \eqref{Gamma} is calm at $0$ for each $X\in\Gamma(0)$. Let $L>0$ be
   the Lipschitz constant of the function $f$ over $\Omega$. Then, when $\rho>c L$ with
   $c$ same as the one in Theorem \ref{theorem2.1}, the following statements hold.
   \begin{description}
   \item[(a)] The global optimal solution set of \eqref{prob} coincides with that of \eqref{penalty-rc};

   \item[(b)] $\overline{X}\in\mathcal{F}$ is a local optimal solution of \eqref{prob}
              if and only if $\overline{X}$ is locally optimal to \eqref{penalty-rc}.
   \end{description}
  \end{theorem}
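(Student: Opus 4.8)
The plan is to exploit the local error bound from Theorem~\ref{theorem2.1} to control how much the objective can be decreased by leaving $\mathcal{F}$, and then compare the two problems' optimal values and optimizers. Fix $\rho>cL$ with $c$ as in Theorem~\ref{theorem2.1}, and write $g_\rho(X):=f(X)+\rho\sum_{i=\kappa+1}^n\sigma_i(X)$ for the penalty objective on $\Omega$; note that $g_\rho$ agrees with $f$ on $\mathcal{F}=\Omega\cap\mathcal{R}$, so the optimal value $v_P$ of \eqref{prob} is an upper bound for the optimal value $v_\rho$ of \eqref{penalty-rc}. For the reverse, let $X\in\Omega$ be arbitrary and let $\widehat X:=\Pi_{\mathcal{F}}(X)$ be a nearest feasible point. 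Using Lipschitz continuity of $f$ on $\Omega$ and then Theorem~\ref{theorem2.1},
\[
 f(X)\ \ge\ f(\widehat X)-L\|X-\widehat X\|_F\ \ge\ f(\widehat X)-Lc\sum_{i=\kappa+1}^n\sigma_i(X),
\]
hence
\[
 g_\rho(X)\ =\ f(X)+\rho\sum_{i=\kappa+1}^n\sigma_i(X)\ \ge\ f(\widehat X)+(\rho-cL)\sum_{i=\kappa+1}^n\sigma_i(X)\ \ge\ v_P,
\]
where the last step uses $\rho-cL>0$ and $f(\widehat X)\ge v_P$. Taking the infimum over $X\in\Omega$ gives $v_\rho\ge v_P$, so $v_\rho=v_P$.

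With equal optimal values, part~(a) follows by inspecting when equality can hold in the chain above. If $X^\star$ solves \eqref{prob}, then $X^\star\in\mathcal{F}$ and $g_\rho(X^\star)=f(X^\star)=v_P=v_\rho$, so $X^\star$ solves \eqref{penalty-rc}. Conversely, if $X^\star$ solves \eqref{penalty-rc}, then $g_\rho(X^\star)=v_\rho=v_P$; feeding this into the inequality $g_\rho(X^\star)\ge f(\widehat X)+(\rho-cL)\sum_{i=\kappa+1}^n\sigma_i(X^\star)\ge v_P$ forces $(\rho-cL)\sum_{i=\kappa+1}^n\sigma_i(X^\star)=0$, and since $\rho-cL>0$ this gives $\sum_{i=\kappa+1}^n\sigma_i(X^\star)=0$, i.e. $X^\star\in\mathcal{R}$, so $X^\star\in\mathcal{F}$; then $f(X^\star)=g_\rho(X^\star)=v_P$ shows $X^\star$ solves \eqref{prob}.

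For part~(b), the key observation is that the error bound of Theorem~\ref{theorem2.1}, being a local statement around each point of $\mathcal{F}$, lets the same argument run on a neighborhood: if $\overline X\in\mathcal{F}$ is locally optimal for \eqref{prob}, there is a ball $\mathbb{B}(\overline X,\delta)$ on which $f(\overline X)\le f(X)$ for all feasible $X$; shrinking $\delta$ if necessary and repeating the estimate above with $v_P$ replaced by $f(\overline X)$ and the infimum taken over $\Omega\cap\mathbb{B}(\overline X,\delta)$ shows $g_\rho(X)\ge f(\overline X)=g_\rho(\overline X)$ there, so $\overline X$ is locally optimal for \eqref{penalty-rc}. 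For the converse direction one uses that a local minimizer $\overline X$ of $g_\rho$ lying in $\mathcal{F}$ automatically satisfies $g_\rho=f$ on $\mathcal{F}$, so it is a local minimizer of $f$ over $\mathcal{F}$. The main obstacle, and the only genuinely delicate point, is making the neighborhood bookkeeping in~(b) uniform: one must ensure that the projection $\widehat X=\Pi_{\mathcal{F}}(X)$ stays within the region where the local error bound constant $c$ is valid when $X$ ranges over a small enough ball about $\overline X$ — this follows because $\|\widehat X-\overline X\|_F\le\|\widehat X-X\|_F+\|X-\overline X\|_F\le 2\|X-\overline X\|_F$, so controlling $\|X-\overline X\|_F$ controls $\|\widehat X-\overline X\|_F$, and the compactness/covering argument behind Theorem~\ref{theorem2.1} supplies a single usable constant.
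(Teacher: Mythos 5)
Your proposal is correct and follows essentially the same route as the paper: both rest on the chain $f(X)+\rho\sum_{i=\kappa+1}^n\sigma_i(X)\ge f(\Pi_{\mathcal{F}}(X))+(\rho-cL)\sum_{i=\kappa+1}^n\sigma_i(X)$ obtained from the Lipschitz continuity of $f$ and the error bound of Theorem~\ref{theorem2.1}, with the equality analysis giving (a) and the localized version giving (b). The only cosmetic differences are your value-function framing of (a) and your cleaner bound $\|\Pi_{\mathcal{F}}(X)-\overline{X}\|_F\le 2\|X-\overline{X}\|_F$ in (b), where the paper instead invokes continuity of $\sum_{i=\kappa+1}^n\sigma_i(\cdot)$; note also that the constant $c$ from Theorem~\ref{theorem2.1} is already uniform over all of $\Omega$, so the uniformity concern you flag at the end is not actually an issue.
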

 \begin{proof}
  (a) Since $\Omega$ is compact and $f$ is continuous, the optimal solution set of
  \eqref{penalty-rc} is nonempty. Let $X^*$ be an arbitrary optimal
  solution of \eqref{prob}. Then, for any $X\in\Omega$,
  \begin{align}\label{exp-eb11}
   f(X)+\rho\sum_{i=\kappa+1}^n\!\sigma_i(X)
  &\geq f(X)+c L\sum_{i=\kappa+1}^n\!\sigma_i(X)\geq f(X)+L\|X-\Pi_{\mathcal{F}}(X)\|_F\nonumber\\
  &\geq f(\Pi_{\mathcal{F}}(X)) \geq f(X^*)=f(X^*)+\rho\sum_{i=\kappa+1}^n\!\sigma_i(X^*)
  \end{align}
  where the second inequality is due to Theorem \ref{theorem2.1}.
  The last inequality shows that $X^*$ is an optimal solution of \eqref{penalty-rc}.
  Now, let $\overline{X}$ be an arbitrary optimal solution of \eqref{penalty-rc}.
  From the feasibility of $X^*$ to \eqref{penalty-rc} and Theorem \ref{theorem2.1},
  it follows that
  \begin{align}\label{exp-eb21}
  f(X^*)&\ge f(\overline{X})+\rho\sum_{i=\kappa+1}^n\!\sigma_i(\overline{X})
   \geq f(\overline{X})+c L\sum_{i=\kappa+1}^n\!\sigma_i(\overline{X})\nonumber\\
  &\geq f(\overline{X})+L\|\overline{X}-\Pi_{\mathcal{F}}(\overline{X})\|_F
   \geq f(\Pi_{\mathcal{F}}(\overline{X})) \geq f(X^*).
  \end{align}
  This means that $f(X^*)\!=\!f(\overline{X})+\rho\sum_{i=\kappa+1}^n\!\sigma_i(\overline{X})$
  and $\rho\sum_{i=\kappa+1}^n\!\sigma_i(\overline{X})\!=\!c L\sum_{i=\kappa+1}^n\!\sigma_i(\overline{X})$.
  Since $\rho\!>\!c L$, we must have $\sum_{i=\kappa+1}^n\!\sigma_i(\overline{X})\!=\!0$,
  and then $\overline{X}\!\in\! \mathcal {F}$. Along with $f(X^*)\!=\!f(\overline{X})$,
  it follows that $\overline{X}$ is an optimal solution of problem \eqref{prob}.
  Thus, part (a) follows.

   \medskip
  \noindent
  (b) Since $\overline{X}\in \mathcal {F}$, it is easy to show that if $\overline{X}$ is
  a local optimal solution of \eqref{penalty-rc}, then $\overline{X}$ is locally optimal of \eqref{prob}.
  Now, we assume that $\overline{X}$ is a local optimal solution of \eqref{prob}.
  Then there exists a neighborhood of $\overline{X}$, denoted by $\mathcal {N}(\overline{X}, \overline{\varepsilon})$
  with $\overline{\varepsilon}>0$, such that
  \begin{align}\label{the1-sun-local1}
  f(\overline{X}) \leq f(X)\ {\rm for\ all}\ X\in \mathcal {N}(\overline{X}, \overline{\varepsilon})\cap \mathcal {F}.
  \end{align}
  Notice that $\sum_{i=\kappa+1}^n\!\sigma_i(X)$ is continuous and $\sum_{i=\kappa+1}^n\!\sigma_i(\overline{X})=0$.
  Hence, there must exist $\widetilde{\varepsilon}>0$ such that
  $\sum_{i=\kappa+1}^n\!\sigma_i(X)<\frac{\overline{\varepsilon}}{2c}$ for all $X\in\mathcal {N}(\overline{X}, \widetilde{\varepsilon})$.
  In the following, we show that for any $X\in \mathcal {N}(\overline{X}, \widehat{\varepsilon})\cap \Omega$
  where $0<\widehat{\varepsilon}\leq \min(\frac{\overline{\varepsilon}}{2},\widetilde{\varepsilon})$, it holds that
  \begin{align}
  f(\overline{X}) + \rho\sum_{i=\kappa+1}^n\!\sigma_i(\overline{X})=f(\overline{X})\leq f(X)+\rho\sum_{i=\kappa+1}^n\!\sigma_i(X).\nonumber
  \end{align}
  Notice that for any $X\!\in\! \mathcal {N}(\overline{X}, \widehat{\varepsilon})\!\cap\! \Omega$, by Theorem \ref{theorem2.1} and $\|X\!-\!\overline{X}\|_F\!\leq\!\widehat{\varepsilon}$ we have
  \begin{align}
  \|\Pi_{\mathcal{F}}(X)-\overline{X}\|_F
  &\leq \|\Pi_{\mathcal{F}}(X)-X\|_F+\|X-\overline{X}\|_F
   \leq c\sum_{i=\kappa+1}^n\!\sigma_i(X)+\widehat{\varepsilon}\leq \overline{\varepsilon}.\nonumber
  \end{align}
  Thus, it holds $\Pi_{\mathcal{F}}(X)\subseteq\mathcal {N}(\overline{X}, \overline{\varepsilon})\cap\mathcal {F}$.
  This, together with inequality \eqref{the1-sun-local1}, yields that
  \[
    f(\overline{X})=f(\overline{X}) + \rho\sum_{i=\kappa+1}^n\!\sigma_i(\overline{X})
    \leq f(\Pi_{\mathcal{F}}(X))\leq f(X)+\rho\sum_{i=\kappa+1}^n\!\sigma_i(X)\quad \forall\ X\in \mathcal {N}(\overline{X}, \widehat{\varepsilon})\cap\Omega,\nonumber
  \]
  where the last inequality is due to \eqref{exp-eb11}.
  Then $\overline{X}$ is a local optimal solution of \eqref{penalty-rc}.
  \end{proof}

  \subsection{Error bound for a multi-stage convex relaxation method}\label{sec3.2}

  Let $\Omega$ be one of the sets in Propositions \ref{prop-ball}-\ref{prop-corr}
  and $f$ be a convex function. By Theorem \ref{epenalty}, when $\rho>cL$ with $L>0$
  being the Lipschitz constant of $f$ over $\Omega$, problem \eqref{penalty-rc}, i.e.
  \begin{align}\label{prob2}
   \min_{X\in \Omega}\Big\{f(X)+\rho(\|X\|_*-\|X\|_\kappa)\Big\}
  \end{align}
  has the same global optimal solution set as that of \eqref{prob}, where $\|X\|_{\kappa}$ means
  the Ky Fan $\kappa$-norm, i.e., the sum of the $\kappa$-largest singular values of $X$. Motivated by this,
  at the current iterate $X^{k-1}$ we replace the concave function $-\|X\|_\kappa$
  in \eqref{prob2} by a linear majorization function $-\langle W^{k-1},X\rangle$,
  and transform the solution of \eqref{prob} or its exact penalty problem \eqref{prob2}
  into the solution of a sequence of convex  minimization problems.
  This leads to a multi-stage convex relaxation approach to problem \eqref{prob}.
  The idea of replacing $-\|X\|_\kappa$ by the linear majorization function
  $-\langle W^{k-1},X\rangle$ first appears in the majorized penalty approach proposed
  in \cite{GS-Major}, where $W^{k-1}\in\partial \|\cdot\|_\kappa(X^{k-1})$ is used.
  Here we consider the multi-stage convex relaxation approach where the linear
  majorization function $-\langle W^{k-1},X\rangle$ is given with
  $W^{k-1}\in\partial \|\cdot\|_\kappa(\widehat{X}_{\mathcal{F}}^{k-1})$,
  where $\widehat{X}_{\mathcal{F}}^{k-1}$ is defined by \eqref{XF}, \eqref{XF-trace}
  and \eqref{XF-corr} respectively with $X=X^{k-1}$. Also, different from
  the majorized penalty approach \cite{GS-Major}, our multi-stage convex relaxation
  approach is using $f$ itself instead of its majorization function.
  \bigskip

  \setlength{\fboxrule}{0.8pt}
  \noindent
  \fbox{
  \parbox{0.96\textwidth}
  {{\bf Multi-stage convex relaxation approach for problem (\ref{prob})}\

   \noindent
   \begin{description}
   \item[(S.0)] Let $\rho_0>0$ be given. Choose a starting point $X^0\in\mathbb{X}$.
                Set $k:=0$.

   \item[(S.1)] Compute $\widehat{X}_\mathcal {F}^{k}$ by \eqref{XF}, \eqref{XF-trace} and \eqref{XF-corr} with $X=X^{k}$
                and the associated $\Omega$.

   \item[(S.2)] Seek $W^{k}\in \partial \|\cdot\|_\kappa(\widehat{X}^{k}_\mathcal {F})$ where
                $\partial \|\cdot\|_\kappa$ is the subdifferential map of $\|\cdot\|_\kappa$.

   \item[(S.3)] Find $X^{k+1}\in\mathop{\arg\min}_{X\in\Omega} \Big\{f(X)+\rho_k(\|X\|_*-\langle W^{k}, X\rangle)\Big\}$.

  \item[(S.4)]  Set $\rho_{k+1}:=\tau_k\rho_k$ with $\tau_k\geq 1$. Let $k\leftarrow k+1$, and go to Step (S.1).

  \end{description}
   }
   }

 \bigskip

  Assume that $\widehat{X}^k_\mathcal {F}$ for $k\ge 0$ has the SVD of the form
  $\widehat{X}^k_\mathcal {F}=\sum_{i=1}^n\sigma_i(\widehat{X}^k_\mathcal {F})u_i^k(v_i^k)^\mathbb{T}$ and
  let $U_1^k=[u_1^k\ u_2^k\ \cdots\ u_{\kappa}^k]$ and $V_1^k=[v_1^k\ v_2^k\ \cdots\ v_{\kappa}^k]$.
  Then,
  \(
   W^k=U_1^k(V_1^k)^\mathbb{T}\in\partial \|\cdot\|_\kappa(\widehat{X}^k_\mathcal {F}).
  \)
  The above multi-stage convex relaxation approach generates the sequences
  $\{\widehat{X}_\mathcal {F}^{k}\}_{k\ge 1}\subseteq\mathcal{F}$
  and $\{X^k\}_{k\ge 1}\subseteq\Omega$. In the following, we bound the distance
  from $\widehat{X}_\mathcal {F}^{k}$ (respectively, $X^k$) to the solution set $\mathcal {F}^*$
  with a bound sequence nonincreasing as the number of stages.
  \begin{theorem}\label{eb-mlsp22}
   Suppose that $f$ is
   a nonnegative smooth convex function with Lipschitz continuous gradient
   over $\Omega$ such that $\vartheta$ defined by \eqref{vtheta} is positive.
   Let $\{\widehat{X}_{\mathcal{F}}^k\}$ and $\{X^k\}$ be the sequences generated by
   the multi-stage convex relaxation approach with $\rho_0\!>\! \max\big(cL,f(\widehat{X}_{\mathcal{F}}^0),\frac{c^2(\vartheta+\overline{L})^2}{4\overline{L}}\big)$,
   where $L>0$ and $\overline{L}>0$ are the Lipschitz constants of $f$ and $\nabla f$
   over $\Omega$, respectively, and $c$ is same as the one in Theorem \ref{theorem2-local1}. Then,
   \begin{align}\label{results-eb-mlsp22211}
   {\rm dist}(\widehat{X}_{\mathcal{F}}^k,\mathcal {F}^*)
   \le \frac{M\!+\!\sqrt{M^2\!+\!4\vartheta(f(\widehat{X}_{\mathcal{F}}^k)\!-\!f(X^*))}}{2\vartheta}\leq\cdots
   \le \frac{M\!+\!\sqrt{M^2\!+\!4\vartheta(f(\widehat{X}_{\mathcal{F}}^0)\!-\!f(X^*))}}{2\vartheta}
   \end{align}
   where $M\!:=\!{\displaystyle\max_{Z\in\mathcal{F}^*}}\|\nabla f(Z)\|$ and $X^*\!\in\!\mathcal{F}^*$, and with
    $\rho_{-1}:=cL$, it holds that
   \begin{align}\label{results-eb-mlsp22}
   {\rm dist}(X^k,\mathcal {F}^*)
   \le \Xi^k\leq \Xi^{k-1}\leq\cdots\leq\Xi^1\leq\Xi^0\quad\ {\rm for}\ k\ge 1,
   \end{align}
   where \(
   \Xi^l\!:=\!\frac{2\sqrt{2\overline{L}}}{\vartheta}\sqrt{f(X^l)\!+\!\rho_{l-1}\sum_{i=\kappa+1}^n\sigma_i(X^l)\!+\!f(X^*)}.
   \)
  \end{theorem}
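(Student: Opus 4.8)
The plan is to combine the local error bound of Theorem \ref{theorem2-local1} with the convexity and Lipschitz-gradient properties of $f$, and then to exploit the monotonicity built into the sequence of convex subproblems. For the bound \eqref{results-eb-mlsp22211}, first I would feed $X=\widehat{X}_{\mathcal{F}}^k\in\mathcal{F}$ into \eqref{slocal-bound}; since $\widehat{X}_{\mathcal{F}}^k$ is already feasible we have $\Pi_{\mathcal{R}}(\widehat{X}_{\mathcal{F}}^k)$ contributing nothing to $\sum_{i=\kappa+1}^n\sigma_i(\cdot)$, so that $\sum_{i=\kappa+1}^n\sigma_i(\widehat{X}_{\mathcal{F}}^k)=0$ and the error bound reduces to ${\rm dist}(\widehat{X}_{\mathcal{F}}^k,\mathcal{F}^*)\le \frac{1}{\vartheta}\|\nabla f(\widehat{X}_{\mathcal{F}}^k)-\nabla f(X^*)\|_F$. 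From the definition of $\vartheta$ in \eqref{vtheta} applied with $X=\widehat{X}_{\mathcal{F}}^k$ and $Y=X^*$, together with convexity, one gets $\vartheta\,{\rm dist}(\widehat{X}_{\mathcal{F}}^k,\mathcal{F}^*)^2\le f(\widehat{X}_{\mathcal{F}}^k)-f(X^*)-\langle\nabla f(X^*),\widehat{X}_{\mathcal{F}}^k-X^*\rangle\le f(\widehat{X}_{\mathcal{F}}^k)-f(X^*)+\|\nabla f(X^*)\|_F\,{\rm dist}(\widehat{X}_{\mathcal{F}}^k,\mathcal{F}^*)$. Writing $d_k:={\rm dist}(\widehat{X}_{\mathcal{F}}^k,\mathcal{F}^*)$, $M:=\max_{Z\in\mathcal{F}^*}\|\nabla f(Z)\|$, and choosing $X^*$ to be a nearest point so $\langle\nabla f(X^*),\widehat{X}_{\mathcal{F}}^k-X^*\rangle\ge -Md_k$, this is a quadratic inequality $\vartheta d_k^2-Md_k-(f(\widehat{X}_{\mathcal{F}}^k)-f(X^*))\le 0$, whose solution is exactly the first expression in \eqref{results-eb-mlsp22211}. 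The second inequality there follows once I show $f(\widehat{X}_{\mathcal{F}}^k)$ is nonincreasing in $k$, which is where the structure of the algorithm enters.

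For the monotonicity of $f(\widehat{X}_{\mathcal{F}}^k)$ and the bound \eqref{results-eb-mlsp22} on $X^k$, the key observation is that the objective of the $(k-1)$st subproblem in (S.3), namely $h_{k-1}(X):=f(X)+\rho_{k-1}(\|X\|_*-\langle W^{k-1},X\rangle)$, majorizes $f(X)+\rho_{k-1}(\|X\|_*-\|X\|_\kappa)$ because $W^{k-1}\in\partial\|\cdot\|_\kappa(\widehat{X}_{\mathcal{F}}^{k-1})$ gives $\langle W^{k-1},X\rangle\le\|X\|_\kappa$, with equality at $X=\widehat{X}_{\mathcal{F}}^{k-1}$. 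Since $\widehat{X}_{\mathcal{F}}^{k-1}\in\mathcal{F}$ has $\|\widehat{X}_{\mathcal{F}}^{k-1}\|_*=\|\widehat{X}_{\mathcal{F}}^{k-1}\|_\kappa$, the majorant equals $f(\widehat{X}_{\mathcal{F}}^{k-1})$ at that point. Therefore $h_{k-1}(X^k)\le h_{k-1}(\widehat{X}_{\mathcal{F}}^{k-1})=f(\widehat{X}_{\mathcal{F}}^{k-1})$, i.e. $f(X^k)+\rho_{k-1}\sum_{i=\kappa+1}^n\sigma_i(X^k)\le f(X^k)+\rho_{k-1}(\|X^k\|_*-\langle W^{k-1},X^k\rangle)\le f(\widehat{X}_{\mathcal{F}}^{k-1})$. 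Next, applying Propositions \ref{prop-ball}--\ref{prop-corr} (whichever $\Omega$ is in force) to $X=X^k$ gives ${\rm dist}(X^k,\mathcal{F})\le\|X^k-\widehat{X}_{\mathcal{F}}^k\|_F\le c\sum_{i=\kappa+1}^n\sigma_i(X^k)$, and by the exactness Theorem \ref{epenalty} (valid since $\rho_{k-1}\ge\rho_{-1}=cL>cL$ is not quite right — rather $\rho_{k-1}\ge\rho_0>cL$ and $\tau_l\ge1$) one controls $f$ along the iterates; combined with the descent property $f(\widehat{X}_{\mathcal{F}}^k)\le f(X^k)+\rho_{k-1}\sum_{i=\kappa+1}^n\sigma_i(X^k)\le f(\widehat{X}_{\mathcal{F}}^{k-1})$ (the first inequality uses $\|\widehat{X}_{\mathcal{F}}^k-X^k\|_F$ Lipschitz-bounded and the nonnegativity of $f$ together with the $\frac{c^2(\vartheta+\overline{L})^2}{4\overline{L}}$ threshold on $\rho_0$), one gets both that $\{f(\widehat{X}_{\mathcal{F}}^k)\}$ decreases and that $f(X^k)+\rho_{k-1}\sum_{i=\kappa+1}^n\sigma_i(X^k)$ decreases in $k$, the latter yielding $\Xi^k\le\Xi^{k-1}$.

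It remains to prove the pointwise bound ${\rm dist}(X^k,\mathcal{F}^*)\le\Xi^k$. Here I would again invoke Theorem \ref{theorem2-local1} with $X=X^k\in\Omega$: ${\rm dist}(X^k,\mathcal{F}^*)\le c\sum_{i=\kappa+1}^n\sigma_i(X^k)+\frac{1}{\vartheta}\|\nabla f(\Pi_{\mathcal{F}}(X^k))-\nabla f(X^*)\|_F$. Using $\overline{L}$-Lipschitz continuity of $\nabla f$, bound $\|\nabla f(\Pi_{\mathcal{F}}(X^k))-\nabla f(X^*)\|_F\le\overline{L}\|\Pi_{\mathcal{F}}(X^k)-X^*\|_F$, and then $\|\Pi_{\mathcal{F}}(X^k)-X^*\|_F\le\|\Pi_{\mathcal{F}}(X^k)-X^k\|_F+{\rm dist}(X^k,\mathcal{F}^*)\le c\sum_{i=\kappa+1}^n\sigma_i(X^k)+{\rm dist}(X^k,\mathcal{F}^*)$. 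Substituting back produces $(1-\frac{\overline{L}}{\vartheta}){\rm dist}(X^k,\mathcal{F}^*)\le c(1+\frac{\overline{L}}{\vartheta})\sum_{i=\kappa+1}^n\sigma_i(X^k)$, which is only useful if $\vartheta>\overline{L}$; since that need not hold, the cleaner route is to bound ${\rm dist}(X^k,\mathcal{F}^*)^2$ directly via the $\vartheta$-strong-convexity-type inequality as in the first part but now with the extra $\sum_{i=\kappa+1}^n\sigma_i(X^k)$ slack, obtaining $\vartheta\,{\rm dist}(X^k,\mathcal{F}^*)^2\le$ (terms controlled by $f(X^k)-f(X^*)$, by $\sum_{i=\kappa+1}^n\sigma_i(X^k)$ and by $\overline{L}$), then absorbing the cross terms by Young's inequality and using $f$ nonnegative to turn $f(X^k)-f(X^*)+\rho_{k-1}\sum_{i=\kappa+1}^n\sigma_i(X^k)$ into the expression under the square root in $\Xi^k$; the threshold $\rho_0>\frac{c^2(\vartheta+\overline{L})^2}{4\overline{L}}$ is exactly what makes the Young's-inequality step close, guaranteeing the coefficient $\frac{\rho_{k-1}}{1}$ in front of $\sum_{i=\kappa+1}^n\sigma_i(X^k)$ dominates the $c^2\frac{(\vartheta+\overline{L})^2}{4\overline{L}}$ coefficient that arises. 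The main obstacle I anticipate is precisely this bookkeeping: tracking which constant absorbs which cross term so that the final bound collapses to the stated closed form $\Xi^l=\frac{2\sqrt{2\overline{L}}}{\vartheta}\sqrt{f(X^l)+\rho_{l-1}\sum_{i=\kappa+1}^n\sigma_i(X^l)+f(X^*)}$, and verifying that the threshold on $\rho_0$ (together with $\tau_l\ge1$ forcing $\rho_{k-1}\ge\rho_0$) is sharp enough to make every step go through uniformly in $k$.
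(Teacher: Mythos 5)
Your treatment of \eqref{results-eb-mlsp22211} matches the paper: the quadratic inequality $\vartheta d_k^2 - Md_k - (f(\widehat{X}_{\mathcal{F}}^k)-f(X^*))\le 0$ obtained from the definition of $\vartheta$, combined with the descent chain $f(\widehat{X}_{\mathcal{F}}^k)\le f(X^k)+\rho_{k-1}\sum_{i=\kappa+1}^n\sigma_i(X^k)\le f(\widehat{X}_{\mathcal{F}}^{k-1})$, is exactly the paper's argument (one small correction: the first of these two inequalities uses only $f(\widehat{X}_{\mathcal{F}}^k)\le f(X^k)+L\|X^k-\widehat{X}_{\mathcal{F}}^k\|_F\le f(X^k)+cL\sum_{i=\kappa+1}^n\sigma_i(X^k)$ together with $\rho_{k-1}\ge cL$; it has nothing to do with the threshold $\frac{c^2(\vartheta+\overline{L})^2}{4\overline{L}}$, and Theorem \ref{epenalty} is not invoked). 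Likewise your derivation of $\Xi^k\le\Xi^{k-1}$ from the monotonicity of $f(X^k)+\rho_{k-1}\sum_{i=\kappa+1}^n\sigma_i(X^k)$ is the paper's.

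The genuine gap is in the pointwise bound ${\rm dist}(X^k,\mathcal{F}^*)\le\Xi^k$. You correctly diagnose that bounding $\|\nabla f(\Pi_{\mathcal{F}}(X^k))-\nabla f(X^*)\|_F$ by $\overline{L}\|\Pi_{\mathcal{F}}(X^k)-X^*\|_F$ leads to a useless $(1-\overline{L}/\vartheta)$ coefficient, but your proposed repair (``bound ${\rm dist}(X^k,\mathcal{F}^*)^2$ directly \ldots absorbing cross terms by Young's inequality'') is left as a sketch and does not produce the $\sqrt{f(X^k)+f(X^*)}$ term under the square root in $\Xi^k$. The paper's route is different and hinges on two steps you did not identify. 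First, split the gradient difference through $\nabla f(X^k)$ rather than through $X^*$: the piece $\|\nabla f(\Pi_{\mathcal{F}}(X^k))-\nabla f(X^k)\|_F\le\overline{L}\,c\sum_{i=\kappa+1}^n\sigma_i(X^k)$ is harmless, while the piece $\|\nabla f(X^k)-\nabla f(X^*)\|_F$ is controlled by the inequality $\|\nabla f(X)\|_F^2\le 2\overline{L}f(X)$, valid for a \emph{nonnegative} convex $f$ with $\overline{L}$-Lipschitz gradient (this is \cite[Equation (2.1.10)]{Nes04}, and it is precisely why nonnegativity of $f$ appears in the hypotheses); this yields the $\frac{2\sqrt{\overline{L}}}{\vartheta}\sqrt{f(X^k)+f(X^*)}$ contribution. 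Second, to merge the linear term $c(1+\overline{L}/\vartheta)\sum_{i=\kappa+1}^n\sigma_i(X^k)$ into the square root, one needs $\sum_{i=\kappa+1}^n\sigma_i(X^k)\le\sqrt{\sum_{i=\kappa+1}^n\sigma_i(X^k)}$, i.e.\ $\sum_{i=\kappa+1}^n\sigma_i(X^k)<1$, which is where the otherwise unexplained component $f(\widehat{X}_{\mathcal{F}}^0)$ of the threshold on $\rho_0$ enters (via $\rho_{k-1}\sum_{i=\kappa+1}^n\sigma_i(X^k)\le f(\widehat{X}_{\mathcal{F}}^{k-1})\le f(\widehat{X}_{\mathcal{F}}^0)<\rho_{k-1}$); only then does $\rho_{k-1}>\frac{c^2(\vartheta+\overline{L})^2}{4\overline{L}}$ and $\sqrt{a}+\sqrt{b}\le\sqrt{2}\sqrt{a+b}$ collapse everything to the stated $\Xi^k$. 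Without these two ingredients your plan for \eqref{results-eb-mlsp22} does not close.
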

 \begin{proof}
  We first argue that the sequence $\{f(\widehat{X}_{\mathcal{F}}^k)\}_{k\geq 0}$ is nonincreasing.
  Indeed, from the proof of Propositions \ref{prop-ball}-\ref{prop-corr} we have
  $\|X^k-\widehat{X}^k_{\mathcal {F}}\|_F\leq c\sum_{i=\kappa+1}^n\sigma_i(X^k)$.
  This implies that
 \begin{align}\label{ebxf-mlsp222}
  f(X^k)+\rho_{k-1}\!\sum_{i=\kappa+1}^n\sigma_i(X^k)
  \ge f(X^k)+cL\!\sum_{i=\kappa+1}^n\sigma_i(X^k)
  \ge f(\widehat{X}^k_{\mathcal {F}}),\ k\geq 0.
 \end{align}
 Since $\|X\|_{\kappa}\!=\!\sum_{i=1}^\kappa\sigma_i(X)\!=\!\sup_{\|W\|\leq 1,\ {\rm rank}(W)\leq \kappa}\langle W, X\rangle$
 for any $X\!\in\! \mathbb{X}$, it follows that
 \begin{equation}\label{eb-mlsp221-0}
  \sum_{i=\kappa+1}^n\sigma_i(X^k)\!=\!\|X^k\|_*\!-\!\sup_{\|W\|\leq 1,{\rm rank}(W)\leq \kappa}\langle W, X^k\rangle\leq \|X^k\|_*\!-\!\langle W^{k-1}, X^k\rangle\
  {\rm for}\ k\ge 1.
 \end{equation}
 From equation \eqref{eb-mlsp221-0} and the definitions of $X^k$ and
 $X_{\mathcal{F}}^{k-1}\in\Omega$ for $k\ge 1$, we have that
 \begin{align}\label{eb-mlsp222}
 f(X^k)+\rho_{k-1}\sum_{i=\kappa+1}^n\sigma_i(X^k)
 \leq f(X^k)+\rho_{k-1}\big(\|X^k\|_*-\langle W^{k-1}, X^k\rangle\big)\le f(\widehat{X}^{k-1}_\mathcal {F}),
 \end{align}
 where the last inequality is due to
 $\langle W^{k-1}, \widehat{X}^{k-1}_\mathcal {F}\rangle\!-\!\|\widehat{X}^{k-1}_\mathcal {F}\|_\kappa\!=\!0$,
 implied by $W^{k-1}\!\in\! \partial \|\cdot\|_\kappa(\widehat{X}^{k-1}_\mathcal {F})$.
 From \eqref{ebxf-mlsp222} and \eqref{eb-mlsp222},
 $\{f(\widehat{X}^k_{\mathcal {F}})\}_{k\geq 0}$ is nonincreasing.
 By the definition of $\vartheta$,
 \[
   \vartheta\|\widehat{X}_{\mathcal{F}}^k\!-\!X^*\|_F^2
   \le f(\widehat{X}_{\mathcal{F}}^k)-f(X^*)-\langle \nabla f(X^*),\widehat{X}_{\mathcal{F}}^k-X^*\rangle
   \le f(\widehat{X}_{\mathcal{F}}^k)-f(X^*)+M\|\widehat{X}_{\mathcal{F}}^k-X^*\|,
 \]
 which implies that
 \(
   \|\widehat{X}_{\mathcal{F}}^k\!-\!X^*\|_F
   \le \frac{1}{2\vartheta}\Big(M+\sqrt{M^2+4\vartheta(f(\widehat{X}_{\mathcal{F}}^k)-f(X^*))}\Big).
 \)
 This, along with the nonincreasing of the sequence $\{f(\widehat{X}^k_{\mathcal {F}})\}_{k\geq 0}$,
 yields the desired result in \eqref{results-eb-mlsp22211}.

 \medskip

 Notice that $X^k\in \Omega$ for all $k\ge 1$. Therefore, from Theorem \ref{theorem2-local1} it follows that
 \begin{align}\label{eb-mlsp221}
 {\rm dist}(X^k,\mathcal {F}^*)
  &\le c\sum_{i=\kappa+1}^n\sigma_i(X^k)\!+\!\frac{1}{\vartheta}\big(\|\nabla f(\Pi_{\mathcal{F}}(X^k))\!-\!\nabla f(X^k)\|_F
     \!+\!\|\nabla f(X^k)\!-\!\nabla f(X^*)\|_F\big)\nonumber\\
  &\le c\sum_{i=\kappa+1}^n\sigma_i(X^k)+\frac{\overline{L}}{\vartheta}\big\|\Pi_{\mathcal{F}}(X^k)-X^k\big\|_F
     +\frac{1}{\vartheta}\|\nabla f(X^k)-\nabla f(X^*)\|_F\nonumber\\
  &\le c\sum_{i=\kappa+1}^n\sigma_i(X^k)+\frac{\overline{L}}{\vartheta}\big\|\Pi_{\mathcal{F}}(X^k)-X^k\big\|_F
     +\frac{2\sqrt{\overline{L}}}{\vartheta}\sqrt{f(X^k)+f(X^*)}\nonumber\\
  & \leq c\Big(1+\frac{\overline{L}}{\vartheta}\Big)\sum_{i=\kappa+1}^n\sigma_i(X^k)
   +\frac{2\sqrt{\overline{L}}}{\vartheta}\sqrt{f(X^k)+f(X^*)}\nonumber\\
  & \le \frac{2\sqrt{\overline{L}}}{\vartheta}\Big[\frac{c(\vartheta+\overline{L})}{2\sqrt{\overline{L}}}\sqrt{{\textstyle\sum_{i=\kappa+1}^n}\sigma_i(X^k)}
   +\sqrt{f(X^k)+f(X^*)}\Big]\nonumber\\
  & \le \frac{2\sqrt{2\overline{L}}}{\vartheta}\sqrt{f(X^k)+\rho_{k-1}{\textstyle\sum_{i=\kappa+1}^n}\sigma_i(X^k)+f(X^*)}
   \nonumber
 \end{align}
 where the second inequality is using the Lipschitz continuity of $\nabla f$ over $\Omega$, the third one is using \cite[Equation (2.1.10)]{Nes04} implied by the assumption
 that $f$ is a nonnegative smooth convex function with Lipschitz continuous gradient, the fifth one is using $\sum_{i=\kappa+1}^n\sigma_i(X^k)<1$ implied by $\rho_{k-1}> f(\widehat{X}_{\mathcal{F}}^0)\geq f(\widehat{X}_{\mathcal{F}}^k)$ and \eqref{eb-mlsp222},
 and the last one is due to $\rho_{k-1}>\frac{c^2(\vartheta+\overline{L})^2}{4\overline{L}}$.
 Thus, to establish inequality \eqref{results-eb-mlsp22}, it suffices to show that
 $\Xi^k\!\leq\! \Xi^{k-1}\!\leq\!\cdots\!\leq\!\Xi^0$ for $k\geq 1$. Indeed,
 by using equations \eqref{ebxf-mlsp222} and \eqref{eb-mlsp222}, we have that
 \begin{align}
 f(X^k)+\rho_{k-1}\sum_{i=\kappa+1}^n\sigma_i(X^k)\le f(\widehat{X}^{k-1}_\mathcal {F})\leq f(X^{k-1})+\rho_{k-2}\sum_{i=\kappa+1}^n\sigma_i(X^{k-1})\ \ {\rm for}\ k\!\ge\! 1.\nonumber
 \end{align}
 This implies that the sequence $\{f(X^k)\!+\!\rho_{k-1}\sum_{i=\kappa+1}^n\sigma_i(X^k)\}_{k\geq 0}$
 is nonincreasing. By the definition of $\Xi^k$, it follows that $\Xi^k\!\leq\! \Xi^{k-1}\!\leq\!\cdots\!\leq\Xi^1\!\leq\!\Xi^0$.
 The proof is completed.
 \end{proof}

 \medskip

 From the proof of Theorem \ref{eb-mlsp22}, we have $\sum_{i=\kappa+1}^n\sigma_i(X^k)\le f(\widehat{X}^{k-1}_{\mathcal {F}})/\rho_{k-1}$.
 By noting that $\sum_{i=\kappa+1}^n\sigma_i(X^k)=0$ means $X^k\in\mathcal{F}$, this shows that
 $X^k$ is an approximate feasible solution to \eqref{prob}, and the infeasibility violation
 becomes smaller as the number of stages increases, since the sequence $\{f(\widehat{X}^{k-1}_{\mathcal {F}}\}$
 is nonincreasing and $\rho_k$ is nondecreasing.

 \section{Conclusions}\label{sec5}

  In this paper we have provided a sufficient and necessary condition to establish
  the Lipschitzian type error bounds for estimating the distance from any $X\in \Omega$
  to the feasible set $\mathcal{F}$ of the rank constrained optimization problem \eqref{prob},
  and showed that this condition is specially satisfied by three classes of common $\Omega$.
  With the help of this result and the error bound for the convex feasibility system,
  we also derived the global error bound for estimating the distance from any $X\in\mathbb{X}$
  to $\mathcal{F}$. Under an additional suitable restricted strong convexity for the objective function $f$,
  the error bounds for estimating the distance from $X\in \Omega$ (or $X\in\mathbb{X}$) to
  the solution set $\mathcal{F}^*$ are also derived. To illustrate the applications of these error bounds,
  we have showed that the penalty problem yielded by moving the rank constraint ${\rm rank}(X)\le\kappa$
  into the objective is exact, which affirmatively answers the open question proposed in \cite{GS-Major}
  about whether the penalty problem (32) there is exact or not, and provided the error bound
  of the iterates generated by a multi-stage convex relaxation approach to \eqref{prob}
  with three classes of special $\Omega$.

  \medskip

  To the best of our knowledge, this paper is the first one to touch the error bound and
  exact penalty for the NP-hard low-rank optimization problems. Clearly, the error bound
  and exact penalty results also hold for the corresponding classes of zero norm constrained
  optimization problems, such as the sparse principal components analysis and the sparse
  portfolio selection problems. Our future research work will focus on the applications of
  these Lipschitzian error bounds to the convergence and iteration complexity of algorithms
  for low-rank constrained optimization problems, especially the error bound of the iterates
  yielded by the majorized penalty approach \cite{GS-Major}.

 \bigskip
 \noindent
 {\large\bf Acknowledgements.} The authors would like to thank anonymous referees
 and the associated editor for their helpful suggestions on the revision of the original manuscript.


\begin{thebibliography}{1}

  \bibitem{Bahmani-GSCO}
  {\sc S.\ Bahmani, B.\ Raj and P. T.\ Boufounos},
  {\em Greedy sparsity-constrained optimization},
  Journal of Machine Learning Research, vol. 14, pp. 807-841, 2013.

  \bibitem{Tony-ROP}
  {\sc T.\ Cai and A.\ Zhang},
  {\em ROP: Matrix recovery via rank-one projections},
  The Annals of Statistics, vol. 43, pp. 102-138, 2015.


   \bibitem{Candes09}
  {\sc E. J.\ Cand{\`e}s and B.\ Recht},
  {\em Exact matrix completion via convex optimization},
  Foundations of Computational Mathematics, vol. 9, pp. 717-772, 2009.

    \bibitem{DR09}
  {\sc A. L.\ Dontchev and R. T.\ Rockafellar},
  {\em Implicit functions and solution mappings $-$ a view from variational analysis},
  Springer, 2009.

  \bibitem{Fazel02}
  {\sc M.\ Fazel},
  {\em Matrix rank minimization with applications}, PhD thesis, Stanford University, 2002.


  \bibitem{Fazel03}
  {\sc M.\ Fazel, H.\ Hindi and S.\ Boyd},
  {\em Log-det heuirstic for matrix rank minimization with applications to Hankel and Euclidean distance matrices},
  In {\em Proceedings of the American Control Conference}, vol. 3, pp. 2156-2162, 2003.



   \bibitem{GS-Major}
  {\sc Y.\ Gao and D. F.\ Sun},
  {\em A majorized penalty approach for calibrating rank constrained correlation matrix problems},
  Technical report, Department of Mathematics, National University of Singapore, 2010.



  \bibitem{Gross11}
  {\sc D.\ Gross},
  {\em Recovering low-rank matrices from few coefficients in any basis},
  IEEE Transactions on Information Theory, vol. 57, pp. 1548-1566, 2011.


  \bibitem{HJ91}
  {\sc R. A.\ Horn and C. R.\ Johnson},
  {\em Topics in matrix analysis}, Cambridge University Presss, Cambridge, 1991.






  \bibitem{LXW13}
  {\sc M. J.\ Lai, Y.\ Xu and W.\ Yin},
  {\em Improved iteratively reweighted least squares for unconstrained smoothed $\ell_q$ minimization},
  SIAM Journal on Numerical Analysis, vol. 51, pp. 927-957, 2013.


    \bibitem{Liu12}
  {\sc Y. J.\ Liu, D. F.\ Sun and K. C.\ Toh},
  {\em An implementable proximal point algorithmic framework for nuclear norm minimization},
  Mathematical Programming, vol. 133, pp. 399-436, 2012.


  \bibitem{LuZh-PDrank}
  {\sc Z.\ Lu, Y.\ Zhang and X.\ Li},
  {\em Penalty decomposition methods for rank minimization},
  Accepted by Optimization Methods and Software, 2014.

  \bibitem{Luo96}
  {\sc Z. Q.\ Luo, J. S.\ Pang and D.\ Ralph},
  {\em Mathematical programs with equilibrium constraints},
  Cambridge University Press, 1996.


  \bibitem{Man-94}
  {\sc O. L.\ Mangasarian},
  {\em Error bounds for inconsistent linear inequalities and programs},
  Operations Research Letters, vol. 15, pp. 187-192, 1994.


  \bibitem{Man-MP98}
  {\sc O. L.\ Mangasarian},
  {\em Error bounds for nondifferentiable convex inequalities under a strong Slater constraint qualification},
  Mathematical Programming, vol. 83, pp. 187-194, 1998.


   \bibitem{Meka-SVP}
  {\sc R.\ Meka, P.\ Jain and I. S.\ Dhillon},
  {\em Guaranteed rank minimization via singular value projection},
  In {\em Advances in Neural Information Processing Systems (NIPS)}, vol. 23, pp. 937-945, 2010.

  \bibitem{Mesbahi}
  {\sc M.\ Mesbahi},
  {\em On the rank minimization problem and its control applications},
  Systems \& Control Letters, vol. 33, pp. 31-36, 1998.


   \bibitem{Miao13}
  {\sc W. M.\ Miao},
  {\em Matrix completion models with fixed basis coefficients and rank regularized problems with hard constraints},
  PhD thesis, National University of Singapore, 2013.

  \bibitem{Mohan12}
  {\sc K.\ Mohan and M.\ Fazel},
  {\em Iterative reweighted algorithm for matrix rank minimization},
  Journal of Machine Learning Research, vol. 13, pp. 3441-3473, 2012.





  \bibitem{Nes04}
  {\sc Y.\ Nesterov},
  {\em Introductory lectures on convex optimization: a basic course},
  Kluwer Academic Publishers, Boston, 2004.

   \bibitem{Pang97-Exp}
  {\sc J. S.\ Pang},
  {\em Error bounds in mathematical programming},
  Mathematical Programming, vol. 79, pp. 299-332, 1997.



  \bibitem{Recht10}
  {\sc B.\ Recht, M.\ Fazel and P. A.\ Parrilo},
  {\em Guaranteed minimum-rank solutions of linear matrix equations via nuclear norm minimization},
  SIAM review, vol. 52, pp. 471-501, 2010.



  \bibitem{Recht-NSP}
 {\sc B.\ Recht, W.\ Xu and B.\ Hassibi},
 {\em Null space conditions and thresholds for rank minimization},
 Mathematical programming, vol. 127, pp. 175-202, 2011.

  \bibitem{Robinson79}
  {\sc S. M.\ Robinson},
  {\em Generalized equations and their solutions. Part I: Basic theory},
  Mathematical Programming Study, vol. 10, pp. 128-141, 1979.


    \bibitem{Salakhutdinov10}
  {\sc R.\ Salakhutdinov and N.\ Srebro},
  {\em Collaborative filtering in a non-uniform world: Learning with the weighted trace norm},
  In {\em Advances in Neural Information Processing Systems (NIPS)}, vol. 23, pp. 2056-2064, 2010.


  \bibitem{SGS-ICML}
  {\sc S.\ Shalev-Shwartz, A.\ Gonen and O.\ Shamir},
  {\em Large-scale convex minimization with a low-rank constraint},
  In International Conference on Machine Learning (ICML), pp. 329-336, 2011.


  \bibitem{Toh10}
  {\sc K. C.\ Toh and S. W.\ Yun},
  {\em An accelerated proximal gradient algorithm for nuclear norm regularized linear least squares problems},
  Pacific Journal of Optimization, vol. 6, pp. 615-640, 2010.



  \bibitem{Wen-SOR}
  {\sc Z.\ Wen, W.\ Yin and Y.\ Zhang},
  {\em Solving a low-rank factorization model for matrix completion by a nonlinear successive over-relaxation algorithm},
  Mathematical Programming Computation, vol. 4, pp. 333-361, 2012.


  \bibitem{Yuan-GHT}
  {\sc X. T.\ Yuan, P.\ Li and T.\ Zhang},
  {\em Gradient hard thresholding pursuit for sparsity-constrained optimization},
  In International Conference on Machine Learning (ICML), pp. 127-135, 2014.



  \bibitem{Zhang-LAA}
  {\sc Z.\ Zhang and L.\ Wu},
  {\em Optimal low-rank approximation to a correlation matrix},
  Linear Algebra and its Application, vol. 364, pp. 161-187, 2003.

    \bibitem{Zhang}
  {\sc S. Z.\ Zhang},
  {\em Global error bounds for convex conic problems},
  SIAM Journal on Optimization, vol. 10, pp. 836-851, 2000.

 \end{thebibliography}
 \end{document}